\providecommand{\tabularnewline}{\\}
\theoremstyle{plain}
\newtheorem{thm}{\protect\theoremname}
\theoremstyle{definition}
\newtheorem{defn}[thm]{\protect\definitionname}
\theoremstyle{plain}
\newtheorem{prop}[thm]{\protect\propositionname}
\theoremstyle{definition}
\newtheorem{example}[thm]{\protect\examplename}
\newenvironment{proof}[1][\protect\proofname]{\par
\normalfont\topsep6\p@\@plus6\p@\relax
\trivlist
\itemindent\parindent
\item[\hskip\labelsep\scshape #1]\ignorespaces
}{%
\endtrivlist\@endpefalse
}
\providecommand{\proofname}{Proof}
\theoremstyle{remark}
\newtheorem{rem}[thm]{\protect\remarkname}
\theoremstyle{plain}
\newtheorem{cor}[thm]{\protect\corollaryname}
\date{}
\providecommand{\corollaryname}{Corollary}
\providecommand{\definitionname}{Definition}
\providecommand{\examplename}{Example}
\providecommand{\propositionname}{Proposition}
\providecommand{\remarkname}{Remark}
\providecommand{\theoremname}{Theorem}
\begin{document}

\begin{frontmatter}{}

\title{\textbf{\huge{}Categorical Comprehensions and Recursion}}

\author[rvt]{Joaqu\'in D\'iaz Boils\corref{cor1}}
 \cortext[cor1]{corresponding author at: boils@uji.es}  \address[rvt]{Facultad de Ciencias Exactas y Naturales.

Pontificia Universidad Cat\'olica del Ecuador. 

170150. Quito. Ecuador.}
\begin{abstract}
{\normalsize{}A new categorical setting is defined in order to characterize
the subrecursive classes belonging to complexity hierarchies. This
is achieved by means of }\emph{\normalsize{}coercion functors}{\normalsize{}
over a symmetric monoidal category endowed with certain recursion
schemes that imitate the }\emph{\normalsize{}bounded recursion scheme}{\normalsize{}.
This gives a categorical counterpart of generalized }\emph{\normalsize{}safe
composition}{\normalsize{} and }\emph{\normalsize{}safe recursion}{\normalsize{}.}{\normalsize \par}\end{abstract}
\begin{keyword}
\emph{Symmetric Monoidal Category},\emph{ Safe Recursion, Ramified
Recursion}.
\end{keyword}

\end{frontmatter}{}

\section{Introduction}

Various recursive function classes have been characterized in categorical
terms. It has been achieved by considering a category with certain
structure endowed with a recursion scheme. The class of Primitive
Recursive Functions ($\mathcal{PR}$ in the sequel), for instance,
has been \emph{chased} simply by means of a cartesian category and
a \emph{Natural Numbers Object} \emph{with parameters} (\emph{nno}
in the sequel, see \cite{L. Rom=0000E1n}). In \cite{Rom=0000E1n-Par=0000E9}
it can be found a generalization of that characterization to a monoidal
setting, that is achieved by endowing a monoidal category with a special
kind of nno (a \emph{left nno}) where the tensor product is included.
It is also known that other classes containing $\mathcal{PR}$ can
be obtained by adding \emph{more structure}: considering for instance
a topos (\cite{Lambek-Scott}), a cartesian closed category (\cite{Thibault})
or a category with finite limits (\citep{L. Rom=0000E1n[2]}).%
\footnote{See \cite{Zalamea} for a summary of those results.%
}

Less work has been made, however, on categorical characterizations
of \emph{subrecursive function classes}, that is, those contained
in $\mathcal{PR}$ (see \citep{Cockett-Diaz-Gallagher-Hrubes} and
\citep{Cockett-Redmond}). In $\mathcal{PR}$ there is at least a
sequence of functions such that every function in it has a more complex
growth than the preceding function in the sequence. Such function
scale allows us to define a \emph{hierarchy} in $\mathcal{PR}$ with
which we can classify the primitive recursive functions according
to its \emph{level of complexity}. This is the case of the \emph{Grzegorzcyk}
\emph{Hierarchy}. 

A reason to not have more studies of subrecursive function classes
in Category Theory at our disposal is that we lack a recursive diagram
with enough expressiveness to characterize the operation of \emph{bounded
recursion} under which most of those classes are closed and looking
like

\[
\begin{cases}
 & f(u,0)=g(u)\\
 & f(u,x+1)=h(u,x,f(u,x))\\
 & f(u,x)\leq j(u,x)
\end{cases}
\]

\noindent The problem arises when, given those functions $g$, $h$
and $j$, we want to know if there exists a function $f$ satisfying
the three conditions in the bounded recursion scheme. 

The known as \emph{safe recursion scheme} was introduced by Bellantoni
and Cook in \citep{Bellantoni-Cook} as a way to substitute the bounding
condition in the above scheme by a syntactical condition. The central
idea of S. Bellantoni and S. Cook was to define two different kinds
of variables (\emph{normal} and \emph{safe variables}) according to
the use we make of them in the process of computation (see \citep{Bellantoni-Cook}
for more details). In \citep{Bellantoni-Cook} the class of polynomial
time functions has been characterized and, subsequently, several other
subrecursive classes.

The \emph{ramified recursion}, in turn, is a way to avoid \emph{impredicativity}
problems. In a ramified system the objects are defined using levels
such that the definition of an object in level $i$ depends only on
levels below $i$. According to \cite{Leivant}, by considering recursion
over a word algebra $\mathbf{A}$, we can get a collection of \emph{levels}
$\mathbf{A}_{j}$ of $\mathbf{A}$ seen as types or universes where
everyone of them contains a copy of the constructors.

The method we will use consists in considering a collection of copies
of $\mathbf{N}$, denoted by $\mathbf{N}_{k}$, such that the functions
defined in every (isomorphic) copy are: 
\begin{itemize}
\item in $\mathbf{N}_{0}$ certain initial functions where \emph{zero} and
\emph{successor} are always present
\item in $\mathbf{N}_{k+1}$ the definable functions using functions defined
in $\mathbf{N}_{j}$ with $j\leq k$ and certain operators, among
which are recursion operators, and whose recursion has been made over
values in $\mathbf{N}_{s}$ with $s\leq k$.
\end{itemize}
We will call these $\mathbf{N}_{i}$ \emph{levels of the natural numbers
}and they have a close relation with different function classes according
to its complexity level. 

The thesis \cite{Otto} uses categories of ordinal numbers%
\footnote{Hereafter we will only consider finite ordinals.%
} to define \emph{coercion functors} with the idea of chasing the ramification
conditions of \cite{Leivant}. Using this method, and introducing
the concept of \emph{symmetric monoidal 2-} and \emph{3-Comprehensions},
J. R. Otto tries to characterize several subrecursive function classes
such as linear time, polynomial time, polynomial space and the classes
$\mathcal{E}^{2}$ and $\mathcal{E}^{3}$ of \emph{Grzegorzcyk} \emph{Hierarchy.} 

The aim of this paper is to give a categorical characterization of
subrecursive hierarchies based on the operations of safe recursion
and composition.

\section{Basic structures}
\begin{defn}
For each $n\in\mathbf{N}$ the category $\mathbf{n}$ has as objects
the natural numbers lower than $n$ and as arrows 
\[
0\longrightarrow1\longrightarrow\cdots\longrightarrow n-1
\]
corresponding to the order of $n$. We denote by $m_{i,j}$ the only
arrow from $i$ to $j$ with $0\leq i\leq j<n$.
\end{defn}

\begin{defn}
Let $M_{n}^{op}$ be the monoid of endofunctors in $\mathbf{n}$ in
which the product $fg$ is the composition $g\circ f$.%
\footnote{$M_{n}^{op}$ is exactly the set of monotone functions from $(n,\leq)$
to $(n,\leq)$ with $n=\{0,...,n-1\}$.%
}

Let's establish a set of elements in $M_{n}^{op}$ from which one
can generate the rest of elements by means of multiplication. This
set is used in \cite{Otto} in the case of $n=2$ and $n=3$.

Let be for every $0\leq k<n-1$ the functors $id:\mathbf{n}\longrightarrow\mathbf{n}$,
$T_{k}:\mathbf{n}\longrightarrow\mathbf{n}$ and $G_{k}:\mathbf{n}\longrightarrow\mathbf{n}$
such that for all $j\in\mathbf{n}$:

\[
id(j)=j\qquad T_{k}(j)=\left\{ \begin{array}{ll}
k+1 & \text{if }j=k\\
j & \text{if }j\neq k
\end{array}\right.\qquad G_{k}(j)=\left\{ \begin{array}{ll}
k & \text{if }j=k+1\\
j & \text{if }j\neq k+1
\end{array}\right.
\]
taking the form

{\small{}
\[
[T_{0}]\;\xymatrix{0\ar[dr] & 0\\
1\ar[r] & 1\\
2\ar[r] & 2\\
\vdots & \vdots\\
n-1\ar[r] & n-1
}
\textrm{ }\textrm{ }[T_{1}]\;\xymatrix{0\ar[r] & 0\\
1\ar[dr] & 1\\
2\ar[r] & 2\\
\vdots & \vdots\\
n-1\ar[r] & n-1
}
\textrm{ }\textrm{ }\textrm{ }\textrm{ }\cdots\textrm{ }\textrm{ }[T_{n-2}]\;\xymatrix{0\ar[r] & 0\\
1\ar[r] & 1\\
\vdots & \vdots\\
n-2\ar[dr] & n-2\\
n-1\ar[r] & n-1
}
\]
}and

{\small{}
\[
[G_{0}]\;\xymatrix{0\ar[r] & 0\\
1\ar[ur] & 1\\
2\ar[r] & 2\\
\vdots & \vdots\\
n-1\ar[r] & n-1
}
\textrm{ }\textrm{ }[G_{1}]\;\xymatrix{0\ar[r] & 0\\
1\ar[r] & 1\\
2\ar[ur] & 2\\
\vdots & \vdots\\
n-1\ar[r] & n-1
}
\textrm{ }\textrm{ }\textrm{ }\textrm{ }\cdots\textrm{ }\textrm{ }[G_{n-2}]\;\xymatrix{0\ar[r] & 0\\
1\ar[r] & 1\\
\vdots & \vdots\\
n-2\ar[r] & n-2\\
n-1\ar[ur] & n-1
}
\]
}In the sequel we will refer to different $T$ and $G$ as \emph{coercion
functors}.\end{defn}
\begin{prop}
\label{Genera}For every $n\in\mathbf{N}$ the monoid $M_{n}^{op}$
can be generated by the finite set 
\[
\{G_{0},\cdots,G_{n-2},T_{0},\cdots,T_{n-2}\}.
\]

\end{prop}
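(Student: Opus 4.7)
The plan is to show that an arbitrary $f\in M_n^{op}$ admits a canonical factorisation $f=\tau\circ\gamma$ in which $\gamma(j)\le j$ and $\tau(j)\ge j$ for every $j\in\mathbf{n}$, and then to establish separately that every monotone function pointwise below $id$ is a product of $G_k$'s while every monotone function pointwise above $id$ is a product of $T_k$'s.

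For the factorisation, list the image of $f$ as $\mathrm{im}(f)=\{a_0<a_1<\cdots<a_{m-1}\}$ and set $\gamma(j)=i$ whenever $f(j)=a_i$, together with $\tau(i)=a_i$ for $i<m$ and any monotone extension $\tau(i)\ge i$ for $i\ge m$ (for instance $\tau(i)=\max(i,a_{m-1})$). Monotonicity of $f$ arranges the preimages as consecutive blocks $f^{-1}(a_i)=[b_{i-1}+1,b_i]$ with $b_i\ge i$, which forces $\gamma(j)\le j$; symmetrically the chain $a_0<\cdots<a_{m-1}$ in $\mathbf{n}$ forces $a_i\ge i$, hence $\tau\ge id$. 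A direct computation then yields $\tau\circ\gamma=f$.

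For the ``lower'' factor I would induct on $\mu_{-}(\gamma):=\sum_{j}(j-\gamma(j))$. If $\mu_{-}(\gamma)=0$ then $\gamma=id$ is the empty product. Otherwise $\gamma(0)=0$ but some value is omitted from $\mathrm{im}(\gamma)$; letting $k+1$ be the smallest omitted value, $k$ lies in $\mathrm{im}(\gamma)$ while $k+1$ does not, so the equation $\gamma=G_k\circ\gamma'$ can be solved by prescribing $\gamma'(j)=\gamma(j)$ whenever $\gamma(j)\ne k$, and by choosing $\gamma'(j)\in\{k,k+1\}$ on the block $\{j:\gamma(j)=k\}$. The symmetric induction on $\mu_{+}(\tau):=\sum_{j}(\tau(j)-j)$ handles the ``upper'' factor: since $\tau\ge id$ forces $\tau(n-1)=n-1$, the largest value $k$ missing from $\mathrm{im}(\tau)$ satisfies $k+1\in\mathrm{im}(\tau)$, and the analogous reduction $\tau=T_k\circ\tau'$ applies.

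The main technical point, and the only non-routine step, is the definition of $\gamma'$ (respectively $\tau'$) on the ambiguous block so that both monotonicity and the bound by $id$ are preserved; assigning everywhere the extremal value can break one of these. I would therefore split the block at the index $k$ itself, choosing $\gamma'(j)=k$ for $j\le k$ inside the block and $\gamma'(j)=k+1$ for $j>k$; a short case check then shows that $\gamma'$ is monotone, still bounded above by $id$, and has strictly smaller $\mu_{-}$, so the induction closes. Combining this reduction with its dual for $\tau$ and with the factorisation $f=\tau\circ\gamma$ yields the proposition.
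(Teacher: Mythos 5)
Your argument is correct, and it in fact supplies something the paper does not: Proposition \ref{Genera} is stated there without any proof, being treated as the standard fact that the monoid of monotone self-maps of a finite chain is generated by the elementary "raise one point" and "lower one point" maps (the case $n=2,3$ being taken from Otto). Your route --- factoring $f=\tau\circ\gamma$ through the image, with $\gamma\le id$ and $\tau\ge id$, and then decomposing each factor by induction on $\mu_-(\gamma)=\sum_j(j-\gamma(j))$ and $\mu_+(\tau)=\sum_j(\tau(j)-j)$ --- is a complete elementary proof; note also that generation is insensitive to the reversed multiplication of $M_n^{op}$, so arguing with ordinary composition is harmless. Two small points should be made explicit in the write-up. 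First, on the block $B=\{j:\gamma(j)=k\}$ the bound $\gamma\le id$ already forces $j\ge k$, so your splitting rule degenerates to: set $\gamma'(k)=k$ if $k\in B$ and $\gamma'(j)=k+1$ for the remaining $j\in B$; monotonicity, $\gamma'\le id$, and $G_k\circ\gamma'=\gamma$ (using $k+1\notin\mathrm{im}\,\gamma$) are then immediate. Second, the induction needs the strict decrease $\mu_-(\gamma')<\mu_-(\gamma)$, i.e.\ that $B$ contains some $j>k$: if $B$ were $\{k\}$ alone, then $\gamma(k+1)$ (the index $k+1$ exists because $k+1\le n-1$) could equal neither $k$ nor the omitted value $k+1$, hence would be at least $k+2$, contradicting $\gamma\le id$; dually, $\tau\ge id$ rules out the block $\{j:\tau(j)=k+1\}$ being $\{k+1\}$ alone in the $T_k$ step. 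With these observations your case check closes and the proposition follows.
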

Now we consider some particular natural transformations in $\mathbf{n}$.
\begin{defn}
\label{nat}Let $\epsilon_{k}:G_{k}\Longrightarrow id$ and $\eta_{k}:id\Longrightarrow T_{k}$
($0\leq k\leq n-2$) be such that for $i\in\mathbf{n}$ 
\[
\epsilon_{k}(i)=\left\{ \begin{array}{ll}
m_{i,i} & \text{if }i\neq k+1\\
m_{i-1,i} & \text{if }i=k+1
\end{array}\right.\qquad\eta_{k}(i)=\left\{ \begin{array}{ll}
m_{i,i} & \text{if }i\neq k\\
m_{i,i+1} & \text{if }i=k
\end{array}\right..
\]
\end{defn}
\begin{thm}
Every non-identity natural transformation in $\mathbf{n}$ can be
generated by means of a composition of natural transformations from
Definition \ref{nat} and right and left multiplication of those natural
transformations and functors from Proposition \ref{Genera}.
\end{thm}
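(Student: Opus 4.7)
The plan is to exploit the fact that $\mathbf{n}$ is a thin poset category: for any two functors $F,G:\mathbf{n}\longrightarrow\mathbf{n}$ there is at most one natural transformation $F\Longrightarrow G$, and such a transformation exists iff $F(i)\leq G(i)$ for every $i$. Hence the task reduces to constructing, from the whiskered $\eta_{k}$'s and $\epsilon_{k}$'s, the unique natural transformation attached to each pair $F\leq G$ with $F\neq G$.

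I would proceed by induction on the invariant $\sigma(F,G):=\sum_{i\in\mathbf{n}}(G(i)-F(i))$. For the inductive step $\sigma\geq 2$, I would exhibit a monotone intermediate $H$ with $F\leq H\leq G$ and $\sigma(F,H)=1$ by starting at the smallest index $j^{\ast}$ with $F(j^{\ast})<G(j^{\ast})$, sliding right along the plateau of $F$ at the value $F(j^{\ast})$, and bumping $F$ by $1$ at the right-hand end of that plateau (or at $n-1$ if the plateau reaches it). The resulting $H$ stays monotone and dominated by $G$, so the induction hypothesis decomposes $H\Longrightarrow G$; vertical pasting with the base-case decomposition of $F\Longrightarrow H$ then yields $F\Longrightarrow G$.

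The base case $\sigma(F,G)=1$ is where the real work lies. Here $F$ and $G$ agree except at a single index $j$, with $G(j)=F(j)+1=:k+1$, and no strict intermediate monotone function sits between them; hence the transformation must be realised by a single whiskered atom. I would construct functors $A,B$ (both composites of the generators of Proposition \ref{Genera}) and an index $k'$ such that either $A\cdot\eta_{k'}\cdot B$ or $A\cdot\epsilon_{k'}\cdot B$ has source $F$ and target $G$. The idea is to pick $B$ so that $B(j)=k'$ and $B(\ell)\neq k'$ for $\ell\neq j$, so that applying $T_{k'}$ (respectively $G_{k'}$) to $B$ disturbs it only at position $j$; one then defines $A$ by $A(B(\ell))=F(\ell)$ for $\ell\neq j$, $A(k')=k$ and $A(k'+1)=k+1$. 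Monotonicity of $A$ follows from that of $F$ and $B$; where $B$ must collapse an interval on which $F$ is already constant, $A$ is automatically forced to a single value on that interval.

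The main obstacle is precisely this base case: verifying that $A,B,k'$ can always be chosen amounts to a short case analysis on where $j$ sits inside $\mathbf{n}$ and on how steeply $F$ moves near $j$. The delicate configurations---$j$ at an endpoint of $\mathbf{n}$, or $k$ equal to $0$ or $n-2$---are precisely those in which $\eta$-whiskering runs out of space on one side of $j$; there one invokes the dual $\epsilon$-whiskering instead. Once this dichotomy is handled the induction closes, and every non-identity natural transformation is displayed as a vertical composite of whiskered copies of the $\eta_{k}$'s and $\epsilon_{k}$'s.
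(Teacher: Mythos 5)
The paper states this theorem without giving any proof, so there is no argument of the author's to measure yours against; I can only assess your proposal on its own terms. Your overall strategy is sound and, as far as I can check, it closes. The observation that $\mathbf{n}$ is thin (so a transformation $F\Rightarrow G$ exists iff $F(i)\leq G(i)$ for all $i$, and is then unique) correctly reduces the problem to realising each comparable pair, and the induction on $\sigma(F,G)=\sum_i\bigl(G(i)-F(i)\bigr)$ works: your intermediate $H$, obtained by bumping $F$ at the right-hand end $j'$ of the plateau through the least index where $F<G$, is monotone (because $F(j'+1)>F(j')$ when $j'<n-1$) and satisfies $H\leq G$ (because $G(j')\geq G(j^{\ast})\geq F(j^{\ast})+1$), so vertical pasting in the thin setting does the rest.

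The only place where your write-up is thinner than the argument needs to be is the base case, which you yourself identify as the crux. Your recipe (choose a monotone $B$ hitting the distinguished value only at $j$, then define $A$ by $A(k')=k$, $A(k'+1)=k+1$, $A(B(\ell))=F(\ell)$) is the right one, but the assertion that a suitable $k'$, $A$, $B$ always exist is a genuine counting statement, not just an endpoint check: writing $d_-$ and $d_+$ for the numbers of distinct values of $F$ on $\{0,\dots,j-1\}$ and $\{j+1,\dots,n-1\}$, the $\eta$-construction needs $k'\geq d_-$ and enough room above (with the extra slot $k'+1$ usable only when $F(j+1)=k+1$), while the $\epsilon$-construction needs room $d_+\leq n-2-k'$ below $n-1$ (with the slot $k'$ usable below only when $F(j-1)=k$). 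One then has to verify that these two feasibility conditions cannot fail simultaneously; this follows from $d_-\leq j$, $d_+\leq n-1-j$ together with a pigeonhole on the value ranges $\{0,\dots,k-1\}$ and $\{k+2,\dots,n-1\}$ in the tight case $d_-+d_+=n-1$, and it is this argument -- rather than the dichotomy ``$j$ at an endpoint or $k\in\{0,n-2\}$'' you cite -- that rules out a configuration where both whiskerings run out of space. With that verification spelled out, your proof is complete; without it, the base case is asserted rather than proved, though the assertion is in fact true.
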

$M_{n}^{op}$ can be seen as a category whose objects are the endofunctors
in $\mathbf{n}$ and whose arrows are the natural transformations
in $\mathbf{n}$.
\begin{thm}
For the definitions given above we have the following chain of adjunctions
\[
T_{k}\dashv G_{k}\dashv T_{k+1}\dashv G_{k+1}
\]
for every $k\in\{0,1,...,n-3\}$. 
\end{thm}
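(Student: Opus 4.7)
Since $\mathbf{n}$ is a thin category, to establish an adjunction $F \dashv G$ between order-preserving endofunctors it suffices to verify the Galois condition $F(a)\leq b \iff a\leq G(b)$ for all $a,b\in\mathbf{n}$; the unit and counit are then automatic and the triangle identities hold vacuously, since any two parallel arrows in a poset coincide.

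The plan is to verify each of the three adjacent adjunctions in the chain by this criterion. Because $T_k$ and $G_k$ differ from the identity only on the two-element window $\{k,k+1\}$, and similarly $T_{k+1}, G_{k+1}$ only on $\{k+1,k+2\}$, the Galois condition is tautologically satisfied whenever $a,b$ lie outside the relevant active window. The proof therefore reduces to a short case analysis restricted to the critical indices near $k, k+1, k+2$. The hypothesis $k\leq n-3$ ensures that $k+2<n$, so every index considered is a legitimate object of $\mathbf{n}$.

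In parallel I would identify the monads and comonads explicitly. A direct calculation gives $T_k G_k = T_k$ and $G_k T_k = G_k$, together with analogous identities for the next pair, so the unit and counit of each adjunction simplify to natural transformations whose only nontrivial components are arrows of the form $m_{i,i+1}$. These are precisely the $\eta_k$ and $\epsilon_k$ of Definition \ref{nat}, possibly whiskered on the left or right by one of the coercion functors. This identification is valuable both as a consistency check and as an explicit source of the unit/counit data for later use in the paper.

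The main obstacle is bookkeeping: since the chain alternates between $T$'s and $G$'s whose active windows overlap but do not coincide, one must keep careful track of which index is in play at each step and correctly pair $\eta$'s and $\epsilon$'s with units and counits after whiskering. Each individual Galois verification, however, is an elementary comparison in the totally ordered set $\mathbf{n}$, so once the case table is laid out correctly all three adjunctions fall out simultaneously.
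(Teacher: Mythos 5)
Your general strategy is the right one for this setting: $\mathbf{n}$ is a poset, so an adjunction between monotone endomaps is just a Galois connection, the functor category $[\mathbf{n},\mathbf{n}]$ is itself a poset, and the triangle identities are automatic once a unit and counit exist. (The paper gives no proof of this theorem at all, so the comparison is purely on correctness.) The problem is that you never actually carry out the in-window cases, and those are exactly where the claim as you read it breaks. Check $T_k \dashv G_k$ at $a=b=k$: the Galois condition would require $T_k(k)=k+1\leq k$ iff $k\leq G_k(k)=k$, which is false-iff-true. Equivalently, $T_k \dashv G_k$ would need a counit $T_kG_k = T_k \Rightarrow id$ (component at $k$ an arrow $k+1\to k$) and a unit $id \Rightarrow G_kT_k = G_k$ (component at $k+1$ an arrow $k+1\to k$), and no such arrows exist in $\mathbf{n}$, where all arrows go upward. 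So the chain cannot be verified in the orientation in which you (reading $\dashv$ in the standard way, left adjoint on the left) set out to verify it; asserting that ``all three adjunctions fall out simultaneously'' papers over precisely this point.

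What the Galois check actually yields is the reversed chain. Your own composite identities $T_kG_k=T_k$ and $G_kT_k=G_k$ are correct, and they show that $\eta_k: id \Rightarrow T_k$ and $\epsilon_k: G_k \Rightarrow id$ from Definition \ref{nat} are the unit and counit of $G_k \dashv T_k$ (unit lands in right-after-left, $T_kG_k$), not of $T_k \dashv G_k$. Similarly $G_kT_{k+1}=T_{k+1}$ and $T_{k+1}G_k=G_k$, so $\eta_{k+1}$ and $\epsilon_k$ exhibit $T_{k+1} \dashv G_k$, while $G_k \dashv T_{k+1}$ fails (it would need a component $k+1\to k$ again). Hence the adjunctions that hold, for $k\in\{0,\dots,n-3\}$, form the chain $G_{k+1} \dashv T_{k+1} \dashv G_k \dashv T_k$, i.e.\ the displayed chain read with the opposite convention for $\dashv$ (or, if one insists on the statement as printed, one must interpret the adjunctions in a bicategory with composition reversed relative to $Cat$, which the paper does not say). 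A correct execution of your plan would either produce this reversed chain or explicitly flag the orientation discrepancy; as written, your proposal claims to verify something that the verification itself refutes.
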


\section{\emph{SM n}-Comprehensions}

For the definition of \emph{SM n-Comprehension} in this Section we
need to consider relations among categories allowing the definition
of categorical structures arising from other structures based on certain
properties that the former \emph{inherits} from the latter. A category
will then have the same certain bicategorical property of another
category if the same commutative diagrams are satisfied for them both.
That is, if there exists a bifunctor between them.
\begin{defn}
A \emph{SM n-Comprehension} $(\mathcal{C},\left\langle T_{k}^{\mathcal{C}}\right\rangle ,\left\langle G_{k}^{\mathcal{C}}\right\rangle ,\left\langle \eta_{k}^{\mathcal{C}}\right\rangle ,\left\langle \epsilon_{k}^{\mathcal{C}}\right\rangle )$
consists of
\begin{itemize}
\item A SM category $\mathcal{C}=(\otimes,\top,l,a,\sigma)$,%
\footnote{We omit the introduction of the \emph{right identity} $r:C\otimes\top\rightarrow C$
defined for every object $C$ in $\mathcal{C}$ for being definable
in terms of $\sigma$ and $l$ as $C\otimes\top\overset{\sigma}{\rightarrow}\top\otimes C\overset{l}{\rightarrow}C$.
It will be used elsewhere in the sequel, however. We also express
the objects \emph{modulo associativity and symmetry} in the sequel.%
}
\item for every $k$ such that $0\leq k<n-1$ the \emph{SM functors} $T_{k}^{\mathcal{C}},G_{k}^{\mathcal{C}}:\mathcal{C}\longrightarrow\mathcal{C}$,%
\footnote{By SM functors\emph{ }we understand that\emph{ }$T_{k}^{\mathcal{C}},G_{k}^{\mathcal{C}}:\mathcal{C}\longrightarrow\mathcal{C}$
satisfy: 

\begin{center}
{\footnotesize{}$T_{k}^{\mathcal{C}}\top=G_{k}^{\mathcal{C}}\top=\top$}
\par\end{center}{\footnotesize \par}

\begin{center}
{\footnotesize{}$T_{k}^{\mathcal{C}}(f\otimes Y)=T_{k}^{\mathcal{C}}f\otimes T_{k}^{\mathcal{C}}Y\qquad$
$G_{k}^{\mathcal{C}}(f\otimes Y)=G_{k}^{\mathcal{C}}f\otimes G_{k}^{\mathcal{C}}Y$}
\par\end{center}{\footnotesize \par}

\begin{center}
{\footnotesize{}$T_{k}^{\mathcal{C}}aXYZ=a(T_{k}^{\mathcal{C}}X)(T_{k}^{\mathcal{C}}Y)(T_{k}^{\mathcal{C}}Z)\qquad$
$G_{k}^{\mathcal{C}}aXYZ=a(G_{k}^{\mathcal{C}}X)(G_{k}^{\mathcal{C}}Y)(G_{k}^{\mathcal{C}}Z)$}
\par\end{center}{\footnotesize \par}

\begin{center}
{\footnotesize{}$T_{k}^{\mathcal{C}}\sigma XY=\sigma(T_{k}^{\mathcal{C}}X)(T_{k}^{\mathcal{C}}Y)\qquad$
$G_{k}^{\mathcal{C}}\sigma XY=\sigma(G_{k}^{\mathcal{C}}X)(G_{k}^{\mathcal{C}}Y)$}
\par\end{center}{\footnotesize \par}

\begin{center}
{\footnotesize{}$T_{k}^{\mathcal{C}}lX=lT_{k}^{\mathcal{C}}X\qquad$
$\textrm{ }G_{k}^{\mathcal{C}}lX=lG_{k}^{\mathcal{C}}X$. }
\par\end{center}%
}
\item for every $k$ such that $0\leq k<n-1$ the \emph{SM} \emph{transformations}
$\eta_{k}^{\mathcal{C}}:id\Longrightarrow T_{k}^{\mathcal{C}}$ and
$\epsilon_{k}^{\mathcal{C}}:G_{k}^{\mathcal{C}}\Longrightarrow id$,%
\footnote{By SM functors\emph{ }we understand that $\eta_{k}^{\mathcal{C}}:id\Longrightarrow T_{k}^{\mathcal{C}}$
and $\epsilon_{k}^{\mathcal{C}}:G_{k}^{\mathcal{C}}\Longrightarrow id$
satisfy 

\begin{center}
{\footnotesize{}$\eta_{k}^{\mathcal{C}}\top=\epsilon_{k}^{\mathcal{C}}\top=1_{\top}$,
$\eta_{k}^{\mathcal{C}}(X\otimes Y)=\eta_{k}^{\mathcal{C}}X\otimes\eta_{k}^{\mathcal{C}}Y$
and $\epsilon_{k}(X\otimes Y)=\epsilon_{k}^{\mathcal{C}}X\otimes\epsilon_{k}^{\mathcal{C}}Y$}
\par\end{center}%
}
\end{itemize}

\noindent and the existence of a bifunctor $\Im:M_{n}^{op}\rightarrow(\mathcal{C},\mathcal{C})$
such that $\Im(T_{k})=T_{k}^{\mathcal{C}}$, $\Im(G_{k})=G_{k}^{\mathcal{C}}$,
$\Im(\eta_{k})=\eta_{k}^{\mathcal{C}}$ and $\Im(\epsilon_{k})=\epsilon_{k}^{\mathcal{C}}$.%
\footnote{That is, what we ask is to commute the same diagrams for $T_{k}^{\mathcal{C}}$,
$G_{k}^{\mathcal{C}}$, $\eta_{k}^{\mathcal{C}}$ and $\epsilon_{k}^{\mathcal{C}}$
than $T_{k}$, $G_{k}$, $\eta_{k}$ and $\epsilon_{k}$. For $\Im$
exist we are looking at $M_{n}^{op}$ as a bicategory with a unique
0-cell $\mathbf{n}$.%
}

\end{defn}
We will denote $(\mathcal{C},\left\langle T_{k}\right\rangle ,\left\langle G_{k}\right\rangle ,\left\langle \eta_{k}\right\rangle ,\left\langle \epsilon_{k}\right\rangle )$
for $(\mathcal{C},\left\langle T_{k}^{\mathcal{C}}\right\rangle ,\left\langle G_{k}^{\mathcal{C}}\right\rangle ,\left\langle \eta_{k}^{\mathcal{C}}\right\rangle ,\left\langle \epsilon_{k}^{\mathcal{C}}\right\rangle )$
when there is no ambiguity.

We will now see that an analogous structure can be defined for the
exponential of a category by considering two different starting cases:
a given \emph{SM }n-Comprehension (Exemple 8) or simply a \emph{SM}
category (Exemple 9). We will see for both structures how a sort of
\emph{exponential} \emph{SM n-Comprehension} can be constructed in
quite a different way. This is achieved by using the \emph{cotensor
product of two $\mathcal{V}$-categories}, a concept we recall in
Appendix 1, specialized to the case of $\mathbf{n}\multimap\mathcal{C}$. 
\begin{example}
An exemple of an exponential \emph{SM} n-Comprehension from a given
a \emph{SM n}-Comprehension $(\mathcal{C},\left\langle T_{k}^{\mathcal{C}}\right\rangle ,\left\langle G_{k}^{\mathcal{C}}\right\rangle ,\left\langle \eta_{k}^{\mathcal{C}}\right\rangle ,\left\langle \epsilon_{k}^{\mathcal{C}}\right\rangle )$
is constructed in the following. We denote for $\chi_{k}^{\mathcal{C}}=\eta_{k}^{\mathcal{C}}\circ\epsilon_{k}^{\mathcal{C}}$
a natural transformation from $G_{k}^{\mathcal{C}}$ to $T_{k}^{\mathcal{C}}$
for each $k=0,...,n-2$ and for which we have the obvious equalities
\[
T_{k}^{\mathcal{C}}\epsilon_{k}^{\mathcal{C}}=G_{k}^{\mathcal{C}}\eta_{k}^{\mathcal{C}}=\chi_{k}^{\mathcal{C}}.
\]

We define a functor between $\mathcal{C}$ and $\mathbf{n}\multimap\mathcal{C}$
by taking the following endofunctors in $\mathcal{C}$%
\footnote{Whenever $k=0$ this expression takes the form $G_{n-2}^{\mathcal{C}}...G_{0}^{\mathcal{C}}$
and in the case of $k=n-1$ the form $T_{0}^{\mathcal{C}}...T_{n-2}^{\mathcal{C}}$.%
} 
\[
G_{n-2}^{\mathcal{C}}...G_{k}^{\mathcal{C}}T_{0}^{\mathcal{C}}...T_{k-1}^{\mathcal{C}}
\]
whose corresponding functors in $\mathbf{n}$ give constant values
for $0\leq k\leq n-1$. That is 
\[
G_{n-2}...G_{k}T_{0}...T_{k-1}(j)=k
\]
for every $j=0,...,n-1$.

We denote $\overline{k}$ for $G_{n-2}^{\mathcal{C}}...G_{k}^{\mathcal{C}}T_{0}^{\mathcal{C}}...T_{k-1}^{\mathcal{C}}$
where $0\leq k\leq n-1$. Let $\chi$ be then the assignation 
\[
\begin{array}{c}
\chi(k)=\overline{k}\\
\chi(m_{k,k+1})=\overline{k}\chi_{k}^{\mathcal{C}}
\end{array}
\]
and
\[
\chi(f\circ g)=\chi(f)\circ\chi(g),
\]
for all $k=0,1,...,n-2$ and for every pair of morphisms $f$ and
$g$ in $\mathbf{n}$. We then have the following assignations:

{\footnotesize{}
\[
\xymatrix{ & \mathbf{n}\overset{\chi}{\longrightarrow}\mathcal{SM}(\mathcal{C},\mathcal{C})\\
0\ar[d]_{m_{0,1}} &  & \overline{0}\ar[d]^{\overline{0}\chi_{0}^{\mathcal{C}}}\\
1\ar[d]_{m_{1,2}} &  & \overline{1}\ar[d]^{\overline{1}\chi_{1}^{\mathcal{C}}}\\
2\ar[d]_{m_{2,3}} & \Longrightarrow & \overline{2}\ar[d]^{\overline{2}\chi_{2}^{\mathcal{C}}}\\
\vdots\ar[d]_{m_{n-3,n-2}} &  & \vdots\ar[d]^{\overline{(n-3)}\chi_{n-3}^{\mathcal{C}}}\\
n-2\ar[d]_{m_{n-2,n-1}} &  & \overline{(n-2)}\ar[d]^{\overline{(n-2)}\chi_{n-2}^{\mathcal{C}}}\\
n-1 &  & \overline{(n-1)}
}
\]
}{\footnotesize \par}

We stress here that $\chi_{k}^{\mathcal{C}}:G_{k}^{\mathcal{C}}\Longrightarrow T_{k}^{\mathcal{C}}$
are natural transformations for endofunctors in $\mathcal{C}$ while
$\chi$ can be seen as a bifunctor with domain $\mathbf{n}$, seen
as a bicategory, and $\mathcal{SM}(\mathcal{C},\mathcal{C})$ as codomain. 

For $\mathbf{n}\multimap\mathcal{C}$ functors are chains of natural
transformations in the form 
\[
\overline{k}\chi_{k}
\]
with $0\leq k\leq n-2$. That is, starting from the unique $(n-1)$-tuple
of natural transformations
\[
[\overline{0}\chi_{0},\overline{1}\chi_{1},...,\overline{(n-2)}\chi_{n-2}]
\]
in $\mathcal{SM}(\mathcal{C},\mathcal{C})$, that can be seen as the
assignation of
\[
\chi:\mathbf{n}\rightarrow\mathcal{SM}(\mathcal{C},\mathcal{C})
\]
for constant values in $Cat$, it can be generated another assignation%
\footnote{By considering the isomorphism $\mathcal{SM}(\mathcal{C},\mathbf{n}\multimap\mathcal{C})\cong Cat(\mathbf{n},\mathcal{SM}(\mathcal{C},\mathcal{C}))$
given above.%
}
\[
\overline{\chi}:\mathcal{C}\rightarrow\mathbf{n}\multimap\mathcal{C}
\]
in $\mathcal{SM}$. This new assignation has in the case of a \emph{n-Comprehension},
among others, the form we have introduced above. 

\noindent With this construction we can assert that whenever $(\mathcal{C},\left\langle T_{k}\right\rangle ,\left\langle G_{k}\right\rangle ,\left\langle \eta_{k}\right\rangle ,\left\langle \epsilon_{k}\right\rangle )$
is a \emph{SM} \emph{n-Comprehension} we can construct a new tuple\emph{
\[
(\mathbf{n}\multimap\mathcal{C},\left\langle T_{k}^{\mathbf{n}}\right\rangle ,\left\langle G_{k}^{\mathbf{n}}\right\rangle ,\left\langle \eta_{k}^{\mathbf{n}}\right\rangle ,\left\langle \epsilon_{k}^{\mathbf{n}}\right\rangle )
\]
}being itself a \emph{SM} \emph{n-Comprehension}.
\end{example}

\begin{example}
\label{setn}An exemple of an exponential \emph{SM} n-Comprehension
from a given \emph{SM} category $\mathcal{C}$ is constructed by considering
again $\mathbf{n}\multimap\mathcal{C}$. 

We now define some endofunctors $T^{e}$ and $G^{e}$ acting in such
a way that for every 
\[
X_{0}\overset{h_{0}}{\longrightarrow}...\overset{h_{n-2}}{\longrightarrow}X_{n-1}
\]
we obtain {\footnotesize{}
\[
T_{k}^{e}(X_{0}\rightarrow...\rightarrow X_{n-1})=X_{0}\rightarrow...\rightarrow X_{k-1}\overset{t}{\longrightarrow}X_{k+1}\overset{id}{\longrightarrow}X_{k+1}\rightarrow...\rightarrow X_{n-1}
\]
}and{\footnotesize{}
\[
G_{k}^{e}(X_{0}\rightarrow...\rightarrow X_{n-1})=X_{0}\rightarrow...\rightarrow X_{k}\overset{id}{\longrightarrow}X_{k}\overset{g}{\longrightarrow}X_{k+2}\rightarrow...\rightarrow X_{n-1}
\]
}where $t=h_{k-1}\circ h_{k}$ and $g=h_{k}\circ h_{k+1}$ and for
every chain of \emph{vertical arrows} $(f_{0},...,f_{n-1})$ we obtain%
\footnote{With the notation established in the description of $\mathbf{n}\multimap\mathcal{C}$
in Appendix 1.%
} 
\[
T_{k}^{e}(f_{0},...,f_{n-1})=(f_{0},...,f_{k-1},f_{k+1},f_{k+1},...,f_{n-1})
\]
and
\[
G_{k}^{e}(f_{0},...,f_{n-1})=(f_{0},...,f_{k},f_{k},f_{k+2},...,f_{n-1})
\]
Making of $\mathbf{n}\multimap\mathcal{C}$ a SM n-Comprehension.

Fixing a single object $X$ there are some special objects in the
form\emph{ }
\[
X^{0}=\;\xymatrix{X\ar[d]\\
\top\ar[d]\\
\top\ar[d]\\
\top\ar[d]\\
\overset{\vdots}{\top}
}
\textrm{ }\textrm{ }\textrm{ }\textrm{ }X^{1}=\;\xymatrix{X\ar[d]^{id_{X}}\\
X\ar[d]\\
\top\ar[d]\\
\top\ar[d]\\
\overset{\vdots}{\top}
}
\textrm{ }\textrm{ }X^{2}=\;\xymatrix{X\ar[d]^{id_{X}}\\
X\ar[d]^{id_{X}}\\
X\ar[d]\\
\top\ar[d]\\
\overset{\vdots}{\top}
}
\cdots\textrm{ }\textrm{ }\textrm{ }\textrm{ }X^{n-1}=\;\xymatrix{X\ar[d]^{id_{X}}\\
X\ar[d]^{id_{X}}\\
X\ar[d]^{id_{X}}\\
X\ar[d]^{id_{X}}\\
\overset{\vdots}{X}
}
\]
where the chains are formed by $n$ objects and $n-1$ arrows. We
call these objects the \emph{levels of} $X$. This levels of an object
$X$ can also be generated by applications of the endofunctors $G_{k}^{e}$
starting from $X^{0}$: 
\[
X^{k}=G_{k-1}^{e}X^{k-1}
\]
or else, starting from $X^{n-2}$ and excluding $X^{n-1}$, by 
\[
X^{k}=T_{k+1}^{e}X^{k+1}
\]
when $k=0,...,n-2$. It gives the following table for the levels of
the object $X$

\begin{center}
\begin{tabular}{|c|cccccc}
\cline{2-7} 
\multicolumn{1}{c|}{} & \multicolumn{1}{c|}{$X^{0}$} & \multicolumn{1}{c|}{$X^{1}$} & \multicolumn{1}{c|}{...} & \multicolumn{1}{c|}{$X^{n-3}$} & \multicolumn{1}{c|}{$X^{n-2}$} & \multicolumn{1}{c|}{$X^{n-1}$}\tabularnewline
\hline 
$T_{0}^{e}$ & $1^{n}$ & $X^{1}$ & ... & $X^{n-3}$ & $X^{n-2}$ & $X^{n-1}$\tabularnewline
\cline{1-1} 
$G_{0}^{e}$ & $X^{1}$ & $X^{1}$ & ... & $X^{n-3}$ & $X^{n-2}$ & $X^{n-1}$\tabularnewline
\cline{1-1} 
$T_{1}^{e}$ & $X^{0}$ & $X^{0}$ & ... & $X^{n-3}$ & $X^{n-2}$ & $X^{n-1}$\tabularnewline
\cline{1-1} 
$G_{1}^{e}$ & $X^{0}$ & $X^{2}$ & ... & $X^{n-3}$ & $X^{n-2}$ & $X^{n-1}$\tabularnewline
\cline{1-1} 
$\vdots$ & $\vdots$ & $\vdots$ & $\vdots$ & $\vdots$ & $\vdots$ & $\vdots$\tabularnewline
\cline{1-1} 
$G_{n-3}^{e}$ & $X^{0}$ & $X^{1}$ & ... & $X^{n-2}$ & $X^{n-2}$ & $X^{n-1}$\tabularnewline
\cline{1-1} 
$T_{n-2}^{e}$ & $X^{0}$ & $X^{1}$ & ... & $X^{n-3}$ & $X^{n-3}$ & $X^{n-1}$\tabularnewline
\cline{1-1} 
$G_{n-2}^{e}$ & $X^{0}$ & $X^{1}$ & ... & $X^{n-3}$ & $X^{n-1}$ & $X^{n-1}$\tabularnewline
\cline{1-1} 
\end{tabular}
\par\end{center}

\end{example}

\section{SM n-Comprehensions with Recursion}

Following \cite{Rom=0000E1n-Par=0000E9}, where some categorical structures
giving rise to primitive recursive functions in the initial monoidal
category with a \emph{left natural numbers object }were introduced\emph{,}
we can establish for some objects in the free \emph{SM} n-Comprehension
with Recursion analogous results. We will see in fact, in the following
Section, that the morphisms generated in the free \emph{SM} n-Comprehension
with Recursion are morphisms between \emph{cocommutative comonoids}
in a \emph{SM} category (see Appendix 2 for a description of these
concepts). This is done to justify the introduction of the safe dependent
recursion schemes in the class of \emph{SM n-Comprehensions with Recursion}
for the so-called\emph{ cartesian objects} below.

We have the following Theorem related to this point (taken from \citep{Baez}). 
\begin{thm}
\label{transmono}Let $\mathcal{C}$ be a \emph{SM} category, $\triangle:\mathcal{C}\longrightarrow\mathcal{C}$
a functor such that $\triangle(C)=C\otimes C$ and $t:\mathcal{C}\longrightarrow\mathcal{C}$
a functor such that $t(C)=\top$ for every object $C$ in $\mathcal{C}$
with monoidal natural transformations $\delta:id\longrightarrow\triangle$
and $\tau:id\longrightarrow t$ such that for every object $C$ in
$\mathcal{C}$ the diagrams {\small{}
\[
\xymatrix{C\ar[r]^{\delta_{C}} & C\otimes C\ar[d]^{C\otimes\tau_{C}}\\
C\ar[u]^{C} & C\otimes\top\ar[l]^{r}
}
\qquad\xymatrix{C\ar[r]^{\delta_{C}} & C\otimes C\ar[d]^{\tau_{C}\otimes C}\\
C\ar[u]^{C} & \top\otimes C\ar[l]^{l}
}
\]
}commute. Then $\mathcal{C}$ is cartesian\emph{ SM}.\end{thm}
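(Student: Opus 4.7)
The plan is to exhibit $(\top,\tau)$ as a terminal object and $(A\otimes B,\pi_{1},\pi_{2})$ as a categorical product, with projections
\[
\pi_{1}:=r\circ(A\otimes\tau_{B}):A\otimes B\to A,\qquad \pi_{2}:=l\circ(\tau_{A}\otimes B):A\otimes B\to B,
\]
so that $\mathcal{C}$ becomes cartesian with $\otimes$ as product and $\top$ as terminal object. The two hypothesis diagrams are exactly the counit laws of a comonoid structure $(C,\delta_{C},\tau_{C})$, so the argument is essentially Fox's theorem: monoidal naturality of $\delta$ and $\tau$ will force every morphism to be a comonoid homomorphism, and the counit laws will then give the universal property.

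First I would establish terminality of $\top$. Given any $f:C\to\top$, naturality of $\tau$ at $f$ yields $\tau_{\top}\circ f=t(f)\circ\tau_{C}$; since $t$ is a constant functor at $\top$ we have $t(f)=\mathrm{id}_{\top}$, and monoidality of $\tau$ (applied to the unit) forces $\tau_{\top}=\mathrm{id}_{\top}$, hence $f=\tau_{C}$. So $\tau_{C}$ is the unique morphism $C\to\top$.

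Next I would define, for $f:D\to A$ and $g:D\to B$, the pairing $\langle f,g\rangle:=(f\otimes g)\circ\delta_{D}$. The projection equation $\pi_{1}\circ\langle f,g\rangle=f$ would reduce, using bifunctoriality of $\otimes$, naturality of $r$, and naturality of $\tau$ at $g$ (which by the previous paragraph gives $\tau_{B}\circ g=\tau_{D}$), to the first diagram of the statement, namely $r\circ(D\otimes\tau_{D})\circ\delta_{D}=\mathrm{id}_{D}$. The symmetric argument using the second diagram handles $\pi_{2}\circ\langle f,g\rangle=g$.

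The delicate step, and where I expect the real work, is uniqueness: if $h:D\to A\otimes B$ satisfies $\pi_{1}\circ h=f$ and $\pi_{2}\circ h=g$, then $h=\langle f,g\rangle$. The key identity I need is
\[
(\pi_{1}\otimes\pi_{2})\circ\delta_{A\otimes B}=\mathrm{id}_{A\otimes B},
\]
proved by unpacking monoidality of $\delta$, which yields a decomposition of the shape $\delta_{A\otimes B}=(A\otimes\sigma_{B,A}\otimes B)\circ(\delta_{A}\otimes\delta_{B})$, and then collapsing the two counit-law triangles on each side together with the symmetric monoidal coherence for $\sigma$, $r$ and $l$. This coherence-diagram chase, juggling the swap $\sigma_{B,A}$ against the unitors, is the main technical obstacle. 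Once it is in hand, naturality of $\delta$ at $h$ finishes the job:
\[
h=(\pi_{1}\otimes\pi_{2})\circ\delta_{A\otimes B}\circ h=(\pi_{1}\otimes\pi_{2})\circ(h\otimes h)\circ\delta_{D}=\bigl((\pi_{1}h)\otimes(\pi_{2}h)\bigr)\circ\delta_{D}=\langle f,g\rangle,
\]
and $\mathcal{C}$ is cartesian symmetric monoidal, as required.
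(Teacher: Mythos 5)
Your proposal is correct, and on the existence half it follows the same route as the paper: the pairing is $\langle f,g\rangle=(f\otimes g)\circ\delta_{D}$, the projections are $r\circ(C_{1}\otimes\tau_{C_{2}})$ and $l\circ(\tau_{C_{1}}\otimes C_{2})$, and the projection equations are reduced, via bifunctoriality and naturality of $\tau$, to the two counit triangles assumed in the statement --- exactly the paper's diagrams $(1)$--$(5)$ in Appendix 3. Where you genuinely diverge is in what you treat as the hard part. The paper dispatches uniqueness with ``the uniqueness is obvious given $f_{1}$ and $f_{2}$'' and never touches terminality of $\top$; you instead isolate the key identity $(\pi_{1}\otimes\pi_{2})\circ\delta_{A\otimes B}=\mathrm{id}_{A\otimes B}$, obtained from monoidality of $\delta$ via the decomposition $\delta_{A\otimes B}=(A\otimes\sigma_{B,A}\otimes B)\circ(\delta_{A}\otimes\delta_{B})$ and the counit laws, and then close with naturality of $\delta$ at $h$. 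This is the standard Fox-style argument, and it is precisely where the hypothesis that $\delta$ is a \emph{monoidal} natural transformation earns its keep --- naturality alone would give the projection equations but not uniqueness of the pairing, so your version makes explicit a step the paper's proof silently relies on. Your terminality argument for $\top$ (naturality of $\tau$ at $f:C\to\top$ plus $\tau_{\top}=\mathrm{id}_{\top}$ from monoidality) is likewise a small but genuine addition, since ``cartesian SM'' in the paper's sense also requires the unit to be terminal. In short: same skeleton as the paper for what the paper proves, plus a complete treatment of the two points the paper leaves implicit; the cost is only the coherence chase with $\sigma$, $l$, $r$ that you correctly flag as routine but tedious.
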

\begin{proof}
See Appendix 3.
\end{proof}
This Theorem says essentially that every \emph{SM} category is a cartesian
\emph{SM} category if we can duplicate and delete data and, roughly
speaking, \emph{duplicate and delete the same datum is the same thing
than doing nothing}.%
\footnote{In the original in \citep{Baez} that condition was argued to be actually
necessary and sufficient. We state just a direction for being enough
for the purpose of this paper.%
}

We now define the basic categorical structure from which we'll develop
recursion in \emph{n}-Comprehensions. That is done by taking a class
of SM n-Comprehensions endowed with more structure, that is, some
recursive diagrams. We then proceed to modify and enrich the structure
with initial diagrams and recursive operators. For that we denote
by $\mathcal{CR}^{n}$ a new class named \emph{SM n-Comprehension
with Recursion} obtained from a SM n-Comprehension in the form of
the following Definition.
\begin{defn}
\label{CN}We define the class of \emph{SM} \emph{n}-\emph{Comprehensions
with Recursion}, denoted by $\mathcal{CR}^{n}$, as the class of \emph{SM}
\emph{n}-\emph{Comprehensions} in the form $(\mathcal{C},\left\langle T_{k}\right\rangle ,\left\langle G_{k}\right\rangle ,\left\langle \eta_{k}\right\rangle ,\left\langle \epsilon_{k}\right\rangle )$\end{defn}
\begin{itemize}
\item containing an object $N_{0}$ and two arrows $0_{0}$ and $s_{0}$
whose diagram (named \emph{initial diagram}) is 
\[
\top\overset{0_{0}}{\longrightarrow}N_{0}\overset{s_{0}}{\longrightarrow}N_{0}.
\]
We define recursively for each $i=1,...,n-2$ the objects $N_{i}$
by the rules%
\footnote{$N_{i}$ will be the \emph{levels of $N$}.%
} 
\[
N_{1}=G_{0}N_{0}
\]
\[
N_{i+1}=G_{i}N_{i}
\]
and morphisms $0_{j}$ and $s_{j}$.%
\footnote{Defined by the following schemes: $\begin{cases}
0_{1}=G_{0}(0_{0})\\
0_{j+1}=G_{j}(0_{j})
\end{cases}$ and $\begin{cases}
s_{1}=G_{0}(s_{0})\\
s_{j+1}=G_{j}(s_{j})
\end{cases}$.%
} In $\mathcal{C}$ we have for each $i=0,1,...,n-2$ and $j=0,1,...,n-1$

\[
T_{i}N_{j}=\begin{cases}
\top & \textrm{if \ensuremath{i=j=0}}\\
N_{i-1} & \textrm{if \ensuremath{i=j\neq0}}\\
N_{j} & \textrm{otherwise}
\end{cases}\qquad G_{i}N_{j}=\begin{cases}
N_{i+1} & \textrm{if \ensuremath{i=j}}\\
N_{j} & \textrm{otherwise}
\end{cases}.
\]
With these definitions we can generate all initial diagrams in the
form 
\[
\top\overset{0_{j}}{\longrightarrow}N_{j}\overset{s_{j}}{\longrightarrow}N_{j}
\]
for $0<j<n-1$ as well as 
\[
T_{0}(\top\overset{0}{\longrightarrow}N_{0}\overset{s}{\longrightarrow}N_{0})=\top\overset{1}{\longrightarrow}\top\overset{1}{\longrightarrow}\top;
\]

\item closed under\emph{ flat recursion }(\emph{$FR$}):

for all morphisms 
\[
g:X\longrightarrow Y\textrm{ and }h:N_{0}\otimes X\longrightarrow Y
\]
where $X$ and $Y$ are in the form $N_{0}^{\alpha}$ there exist
a unique 
\[
f:N_{0}\otimes X\longrightarrow Y
\]
in $\mathcal{C}$, which we will denote by $FR(g,h)$, such that the
following diagram commutes%
\footnote{This is actually a coproduct diagram. By applying $G$ to this diagram
we obtain flat recursion for successive levels of $N$, we denote
them by $FR_{k}$ for $1\leq k\leq n-2$. $FR_{k}$ diagrams give
to the initial diagrams appropriate properties such as the injectivity
of the \emph{successor function} $s$.%
}
\[
\xymatrix{\top\otimes X\ar[rr]^{0_{0}\otimes X}\ar[drr]_{g\circ l} &  & N_{0}\otimes X\ar[d]^{f} &  & N_{0}\otimes X\ar[dll]^{h}\ar[ll]_{s_{0}\otimes X}\\
 &  & Y
}
\]

\item closed under\emph{ safe ramified recursion} diagrams on each level
$k$ ($SRR_{k}$):

for all $k=0,1,...,n-2$ and for all morphisms 
\[
g:X\longrightarrow Y\textrm{ and }h:Y\longrightarrow Y
\]
where $T_{k}...T_{0}Y$ is isomorphic to $\top$ there exist a unique
\[
f:N_{k+1}\otimes X\longrightarrow Y
\]
in $\mathcal{C}$, which we will denote by $SRR_{k}(g,h)$, such that
the following diagram commutes 
\[
\xymatrix{\top\otimes X\ar[rr]^{0\otimes X}\ar[d]_{l} &  & N_{k+1}\otimes X\ar[rr]^{s\otimes X}\ar[d]^{f} &  & N_{k+1}\otimes X\ar[d]^{f}\\
X\ar[rr]_{g} &  & Y\ar[rr]_{h} &  & Y
}
\]

\item naming \emph{cartesian objects} in $\mathcal{CR}^{n}$ the objects
in the form {\small{}$\underset{i=0}{\overset{n-1}{\bigotimes}}N_{i}^{\alpha_{i}}$,
}we have that for every cartesian object $\mathcal{CR}^{n}$ is also
closed under \emph{safe dependent recursion} in each level $k$ ($SDR{}_{k}$):

for all $k=0,...,n-2$ and for all morphisms 
\[
g:X\longrightarrow Y\textrm{ and }h:(N_{k+1}\otimes X)\otimes Y\longrightarrow Y
\]
where $T_{k}...T_{0}Y$ is isomorphic to $\top$ and $X$ and $Y$
are cartesian objects there exist a unique 
\[
f:N_{k+1}\otimes X\longrightarrow Y
\]
in $\mathcal{C}$, which we will denote by $SDR_{k}(g,h)$, such that
the following diagram commutes

\noindent \begin{center}
\hspace{10em}\xymatrix{\top\otimes X\ar[rr]^{0_{k+1}\otimes X}\ar[drr]_{\hspace{-1em}(0_{k+1}\otimes X),g\circ\pi_{1}} &  & N_{k+1}\otimes X\ar[rr]^{s_{k+1}\otimes X}\ar[d]^{id,f} &  & N_{k+1}\otimes X\ar[d]^{f}\\ &  & (N_{k+1}\otimes X)\otimes Y\ar[rr]_{h} &  & Y}
\par\end{center}

\end{itemize}
Elements of $\mathcal{CR}^{n}$ are then \emph{SM} \emph{n-Comprehensions}
with four different shaped diagrams and certain bounding conditions
on the objects over which those diagrams are acting. Note at this
point also that the number of nested recursions made in every step
is exactly the recursion level in every scheme (see \citep{Bellantoni-Niggl}).
\begin{example}
\label{ex}Our exemple of SM n-Comprehension with Recursion consists
of defining a cotensor in the form of a presheaf. Consider the category
$Set^{n^{op}}$ which we denote by $\widehat{Set}$. Its objects are
chains of sets indexed by $\mathbf{n}^{op}$: 
\[
X_{n-1}\overset{f_{n-2}}{\longrightarrow}...\overset{f_{0}}{\longrightarrow}X_{0}
\]
and its arrows squares built out of them.

By fixing a single set $X$ we have some special objects $X^{k}$
for $k=0,...,n-1$ in the same form than those given in the Exemple
\ref{setn}\emph{ }
\begin{itemize}
\item $\widehat{Set}$ is a SM category
\item It has as terminal object chains $1\rightarrow\ldots\rightarrow1$
denoted by $1^{n}$ where $1$ is whatever set with a single object
\item For $k\in\{0,1,...,n-1\}$ and taking $0$ (\emph{zero}) and \emph{$s$
}(\emph{successor}) from the usual diagram $1\overset{0}{\longrightarrow}\mathbf{N}\overset{s}{\longrightarrow}\mathbf{N}$
in $Set$ we have the chains of functions

\begin{itemize}
\item $0_{k}:1^{n}\rightarrow\mathbf{N}^{k}$ in the form
\[
\xymatrix{1\ar[r]\ar[d]_{0} & \cdots\ar[r] & 1\ar[d]_{0}\ar[r] & 1\ar[d]\ar[r] & \cdots\ar[r] & 1\ar[d]\\
\mathbf{N}\ar[r] & \cdots\ar[r] & \mathbf{N}\ar[r] & 1\ar[r] & \cdots\ar[r] & 1
}
\]
with $k$ zero arrows and $n-k-1$ arrows with no name which are identities
\item $s_{k}:\mathbf{N}^{k}\rightarrow\mathbf{N}^{k}$ in the form
\[
\xymatrix{\mathbf{N}\ar[r]\ar[d]_{s} & \cdots\ar[r] & \mathbf{N}\ar[d]_{s}\ar[r] & 1\ar[d]\ar[r] & \cdots\ar[r] & 1\ar[d]\\
\mathbf{N}\ar[r] & \cdots\ar[r] & \mathbf{N}\ar[r] & 1\ar[r] & \cdots\ar[r] & 1
}
\]
with $k$ successor arrows and $n-k-1$ arrows with no name which
are identities.
\end{itemize}
\item We define the endofunctors $T_{k}^{\widehat{e}}\textrm{ and }G_{k}^{\widehat{e}}$
in $\widehat{Set}$ in the same way than Exemple \ref{setn} but reversing
the subindexes:

for every $X_{n-1}\overset{h_{n-2}}{\longrightarrow}...\overset{h_{0}}{\longrightarrow}X_{0}$
we obtain 
\[
T_{k}^{\widehat{e}}(X_{n-1}\rightarrow...\rightarrow X_{0})=
\]
\[
=X_{n-1}\rightarrow...\rightarrow X_{n-1-k}\overset{id}{\longrightarrow}X_{n-1-k}\overset{t}{\longrightarrow}X_{n-3-k}\rightarrow...\rightarrow X_{0}
\]
and
\[
G_{k}^{\widehat{e}}(X_{n-1}\rightarrow...\rightarrow X_{0})=
\]
\[
X_{n-1}\rightarrow...\rightarrow X_{n-k}\overset{g}{\longrightarrow}X_{n-2-k}\overset{id}{\longrightarrow}X_{n-2-k}\rightarrow...\rightarrow X_{0}
\]
where $t=h_{n-3-k}\circ h_{n-2-k}$ and $g=h_{n-1-k}\circ h_{n-2-k}$
and for every chain of vertical arrows $(f_{n-1},...,f_{0})$ we obtain%
\footnote{With the notation established in the description of $\mathbf{n}\multimap\mathcal{C}$
in Appendix 1.%
} 
\[
T_{k}^{\widehat{e}}(f_{n-1},...,f_{0})=(f_{n-1},...,f_{n-1-k},f_{n-1-k},f_{n-3-k},...,f_{0})
\]
and
\[
G_{k}^{\widehat{e}}(f_{n-1},...,f_{0})=(f_{n-1},...,f_{n-k},f_{n-k},f_{n-2-k},...,f_{0})
\]

\item We can define a bifunctor $\Im:M_{n}^{op}\rightarrow(\widehat{Set},\widehat{Set})$
sending every $T_{k},G_{k},\eta_{k},\epsilon_{k}$ to $T_{k}^{\widehat{e}},G_{k}^{\widehat{e}},\eta_{k}^{\widehat{e}},\epsilon_{k}^{\widehat{e}}$
respectively.
\end{itemize}
\end{example}
\begin{prop}
\label{cart}Every cartesian object in $\mathcal{C}\in\mathcal{CR}^{n}$
is endowed with \emph{diagonal} and \emph{eraser} \emph{morphisms}
satisfying the hypothesis of Theorem \ref{transmono}. \end{prop}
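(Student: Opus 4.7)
The plan is to construct, for every cartesian object $C=\bigotimes_{i=0}^{n-1}N_i^{\alpha_i}$, a diagonal $\delta_C\colon C\to C\otimes C$ and an eraser $\tau_C\colon C\to\top$ and then verify the two triangles of Theorem \ref{transmono}. Because the class of cartesian objects is generated from $\top$ and the $N_k$ under $\otimes$, it suffices to (i) build $\delta$ and $\tau$ on each generator, (ii) extend them along $\otimes$ by the standard comonoid tensor, and (iii) check the triangles on each generator; monoidal coherence then propagates both axioms to every cartesian object.

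For the eraser, the comprehension unit $\eta_k\colon id\Rightarrow T_k$ evaluated at $N_k$ yields $\eta_k N_k\colon N_k\to T_k N_k$; by the table of Definition \ref{CN} this codomain is $\top$ when $k=0$ and $N_{k-1}$ when $k\ge 1$. Iterating, one sets
\[
\tau_\top=id_\top,\qquad \tau_{N_k}=(\eta_0 N_0)\circ(\eta_1 N_1)\circ\cdots\circ(\eta_k N_k).
\]
For the diagonal, $\delta_{N_k}\colon N_k\to N_k\otimes N_k$ should be the unique morphism characterised by the primitive recursive equations $\delta_{N_k}\circ 0_k=0_k\otimes 0_k$ and $\delta_{N_k}\circ s_k=(s_k\otimes s_k)\circ\delta_{N_k}$. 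It is obtained as the $SDR_{k-1}$-solution (or, for $k=0$, as the unique solution forced by $FR$ in its coproduct form) with $g=(0_k,0_k)$ and $h$ built from $s_k\otimes s_k$.

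Commutativity of the two triangles on a generator $N_k$ then reduces, by the uniqueness clause of the recursion scheme defining $\delta_{N_k}$, to checking that the composites $(N_k\otimes\tau_{N_k})\circ\delta_{N_k}$ and $(\tau_{N_k}\otimes N_k)\circ\delta_{N_k}$ satisfy the same base case and recurrence as $r^{-1}$ and $l^{-1}$ respectively; both checks follow from the SM naturality of $\tau$ together with the unit identities $\eta_k\top=1_\top$ built into the definition of an SM $n$-Comprehension. The extension to an arbitrary cartesian object $C=A\otimes B$ is routine: the comonoid tensor
\[
\delta_{A\otimes B}=(A\otimes\sigma\otimes B)\circ(\delta_A\otimes\delta_B),\qquad \tau_{A\otimes B}=l\circ(\tau_A\otimes\tau_B)
\]
transports both triangle identities across $\otimes$ by the coherence of $a$, $l$, $r$ and $\sigma$.

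The main obstacle will be the construction of $\delta_{N_k}$ itself: the safe recursion schemes $SRR_k$ and $SDR_k$ demand that the output type $Y$ satisfy $T_k\cdots T_0 Y\cong\top$, i.e.\ that $Y$ lie at a strictly lower level than the recursion variable, whereas the natural target $N_k\otimes N_k$ lives at the same level $k$. A direct safe recursion on $N_k$ into $N_k\otimes N_k$ is therefore forbidden. The expected workaround exploits the chain of adjunctions $T_k\dashv G_k\dashv T_{k+1}\dashv G_{k+1}$: one defines a diagonal at a safe lower level and lifts it through $G_k$, using the counit $\epsilon_k$ and the uniqueness of the recursion scheme to identify the transported morphism with the required $\delta_{N_k}$. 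This interaction between the safety constraint on $Y$ and the coercion structure is the technical core of the proof.
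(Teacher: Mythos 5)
Your overall strategy matches the paper's: erasers come from the level-lowering structure (the paper builds $\tau_{N_{k+1}}$ from an $SRR$ instance with target $\top$ and takes $\eta_0 N_0$ at the bottom level, so your composite of the $\eta_j N_j$ is an acceptable variant), the diagonal is obtained from a safe recursion whose output sits strictly below the recursion variable and is then transported back up by a coercion functor, and the triangles of Theorem \ref{transmono} are checked through the uniqueness clauses of the recursion schemes (the paper disposes of them in one line this way). Concretely the paper takes the unique $f\colon N_{k+1}\otimes\top\to N_k\otimes N_k$ with base $0_k\otimes 0_k$ and step $s_k\otimes s_k$ (the target is admissible because it is built only from level $\le k$), and sets the duplication equal to $G_k(f\circ r^{-1})$.

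The gap is that this is precisely the step you leave as an announced ``expected workaround'' rather than carry out, and the two concrete prescriptions you do give for $\delta_{N_k}$ fail. A direct $SDR_{k-1}$-solution with output $N_k\otimes N_k$ is excluded by the safety condition $T_{k-1}\cdots T_0 Y\cong\top$, as you yourself observe; and the $k=0$ case cannot be ``forced by $FR$'': flat recursion is a coproduct-style scheme in which the step morphism $h\colon N_0\otimes X\to Y$ never receives the recursive value, so the recurrence $\delta_{N_0}\circ s_0=(s_0\otimes s_0)\circ\delta_{N_0}$ is not an instance of it. The paper in fact concedes, in the Remark immediately following the Proposition, that no diagram yields $\delta_{N_0}$ and that for $n=2$ there are no coercions available to lower a higher-level diagonal, which is why it restricts to $n>2$ from that point on; your proposal neither supplies $\delta_{N_0}$ nor records this restriction. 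So the technical core — the explicit recursion-plus-$G_k$ construction of the duplications, including the treatment of the bottom level — is missing from your argument, while the surrounding scaffolding (tensor extension, coherence, uniqueness for the triangle identities) agrees with the paper.
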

\begin{proof}
\emph{Eraser} and \emph{duplication} morphisms can be both defined
on every cartesian object in $\mathcal{CR}^{n}$. Let then be $X$
a cartesian object belonging to $\mathcal{C}$ in $\mathcal{CR}^{n}$:
\begin{enumerate}
\item \emph{Eraser} morphisms $\tau_{X}:X\longrightarrow\top$ in $\mathcal{C}$
can be defined recursively by considering:

\begin{itemize}
\item if $X=\top$ we take $\tau_{\top}=1_{\top}$
\item if $X=N_{k+1}$ with $k=0,...,n-1$ we can form the following instance
of safe ramified recursion 
\[
\xymatrix{\top\otimes\top\ar[rr]^{0_{k+1}\otimes\top}\ar[drr]_{l} &  & N_{k+1}\otimes\top\ar[d]^{f}\ar[rr]^{s_{k+1}\otimes\top} &  & N_{k+1}\otimes\top\ar[d]^{f}\\
 &  & \top\ar[rr]_{id} &  & \top
}
\]
and the composition $\tau_{N_{k+1}}=f\circ r^{-1}$%
\footnote{To obtain the arrow $\tau_{N_{0}}:N_{0}\longrightarrow\top$ we take
$\eta N_{0}$.%
} 
\item if $X=Y\otimes Z$ with $Y$ and $Z$ in any of the former cases then
we also have the \emph{eraser} morphism by recalling that $\tau_{X}X=\tau_{Y}Y\otimes\tau_{Z}Z$.
\end{itemize}
\item \emph{Duplication} morphisms $\delta_{N_{k}}$ can be obtained by
the following diagrams 
\[
\xymatrix{\top\otimes\top\ar[rr]^{0_{k+1}\otimes\top}\ar[drr]_{0_{k}\otimes0_{k}} &  & N_{k+1}\otimes\top\ar[rr]^{s_{k+1}\otimes\top}\ar[d]^{f} &  & N_{k+1}\otimes\top\ar[d]^{f}\\
 &  & N_{k}\otimes N_{k}\ar[rr]_{s_{k}\otimes s_{k}} &  & N_{k}\otimes N_{k}
}
\]
for each $k=1,...,n-2$ and the composition $\delta_{N_{k}}=G_{k}(f\circ r^{-1})$.
\end{enumerate}

Squares of Theorem \ref{transmono} involving eraser and duplication
are also commutative in $\mathcal{C}\in\mathcal{CR}^{n}$ because
of their uniqueness.

\end{proof}
\begin{rem}
$\delta_{N_{0}}$ has the problem that we have not at our disposal
neither a diagram giving it nor coercion functors allowing us, when
$n=2$, to lower the level of the object over which it is acting.
We'll consider therefore in the sequel $n>2$.\end{rem}
\begin{example}
Exemple \ref{ex} can be extended to get cartesian objects. They exist
obviously in $\widehat{Set}$ as those chains of sets $X_{n-1}\rightarrow...\rightarrow X_{0}$
where each $X_{k}$ is of the form $\underset{i=0}{\overset{n-1}{\bigotimes}}\mathbf{N}_{i}^{\alpha_{i}}$
for $k=0,...,n-1$.
\end{example}
With the last result we point that every cartesian object\emph{ }in\emph{
}$\mathcal{C}\in\mathcal{CR}^{n}$\emph{ }behave as we expect, that
is, they are really cartesian in the sense of Theorem \ref{transmono}.
That concept of cartesian object was devoted in the Definition of
$\mathcal{CR}^{n}$ to introduce the so-called \emph{safe dependent
recursion} and is inspired on the results in \citep{Rom=0000E1n-Par=0000E9},
where it was proven that all the objects in the initial monoidal category
with a \emph{left natural numbers object} are powers of it.

\section{The free \emph{SM n}-Comprehension with Recursion}

By endowing the initial \emph{SM} category with all \emph{initial
diagrams} and all required recursion schemes, we consider the free
\emph{SM n-Comprehension with Recursion, }which we\emph{ }denote $\mathcal{FR}^{n}$\emph{.
}Now, regarding some concepts of the previous section and some results
of \citep{Rom=0000E1n-Par=0000E9}, we see that $\mathcal{FR}^{n}$
is actually a cartesian \emph{SM} category, which allows us to consider
$SDR$ diagrams in it. 
\begin{thm}
$\mathcal{FR}^{n}$ is cartesian.\end{thm}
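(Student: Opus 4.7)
The plan is to invoke Theorem \ref{transmono} by exhibiting on every object of $\mathcal{FR}^{n}$ a natural duplication $\delta$ and a natural eraser $\tau$ satisfying the two triangles of that theorem. Proposition \ref{cart} has already supplied both morphisms, together with the triangle identities, on the so-called \emph{cartesian objects} (those of the shape $\bigotimes_{i=0}^{n-1}N_{i}^{\alpha_{i}}$), so the real content is: first, that every object of $\mathcal{FR}^{n}$ is (up to associativity and symmetry) a cartesian object; and second, that the families $\delta_{X}$ and $\tau_{X}$ are \emph{natural} with respect to every morphism freshly produced by the generators of $\mathcal{FR}^{n}$.

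For the first point, I would argue by the freeness of $\mathcal{FR}^{n}$. Since the only object-forming operations available are $\otimes$, the unit $\top$, and the coercion functors $T_{k},G_{k}$ (whose action on the generators $N_{j}$ is tabulated in Definition \ref{CN} and always returns $\top$ or some $N_{i}$), a straightforward induction on the construction of an object shows that every object of $\mathcal{FR}^{n}$ is, modulo the symmetric monoidal coherence, of the form $\bigotimes_{i=0}^{n-1}N_{i}^{\alpha_{i}}$. In particular, the recursion schemes $FR$, $SRR_{k}$, $SDR_{k}$ introduce \emph{no new objects}, only morphisms. Consequently, Proposition \ref{cart} already provides $\delta_{X}$ and $\tau_{X}$ on every object $X$ of $\mathcal{FR}^{n}$, and the two triangles of Theorem \ref{transmono} hold on each $X$ by the uniqueness argument indicated at the end of its proof.

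For the second point, I would verify naturality by induction on the generation of morphisms. For identities, symmetries, associators and unit isomorphisms, naturality of $\delta,\tau$ is automatic from their definition as SM-natural transformations built componentwise out of the $N_{i}$-components. For $\otimes$ and composition the inductive step is formal. For the coercion data $\eta_{k},\epsilon_{k}$ one uses that the SM functors $T_{k},G_{k}$ preserve $\otimes$ and $\top$, so both $\delta$ and $\tau$ transport correctly through them. The delicate case is naturality against a morphism $f=FR(g,h)$, $f=SRR_{k}(g,h)$ or $f=SDR_{k}(g,h)$: here one shows that the two candidate morphisms obtained by precomposing $\delta$ (resp.\ $\tau$) with $f$ and by postcomposing with $f\otimes f$ (resp.\ with $1_{\top}$) both satisfy the \emph{same} recursion diagram (with parameters coming from $\delta_{X},\delta_{Y}$ or $\tau_{Y}$), and then invokes the uniqueness clause of the recursion scheme to conclude they are equal.

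The main obstacle is precisely this last verification: setting up, for each of the three recursion schemes of Definition \ref{CN}, the auxiliary diagram that both sides of the naturality square satisfy, and checking that the base morphism and the step morphism have the correct cartesian-object hypotheses ($T_{k}\cdots T_{0}Y\cong\top$, etc.) so that the uniqueness clause actually applies. Once this is done for $FR$, $SRR_{k}$ and $SDR_{k}$, naturality of $\delta$ and $\tau$ extends to all of $\mathcal{FR}^{n}$, the hypotheses of Theorem \ref{transmono} are met at every object, and $\mathcal{FR}^{n}$ is cartesian SM.
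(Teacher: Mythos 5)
Your proposal is correct and follows essentially the same route as the paper: the paper's proof is the one-line appeal to Proposition \ref{cart} (diagonal and eraser on cartesian objects) combined with Theorem \ref{transmono}, with the fact that every object of the free category is a cartesian object left implicit (it is only hinted at via the reference to Par\'e--Rom\'an). You simply make explicit the two verifications the paper glosses over, namely that freeness forces every object to have the form $\bigotimes_{i}N_{i}^{\alpha_{i}}$ and that naturality of $\delta,\tau$ against $FR$, $SRR_{k}$ and $SDR_{k}$ morphisms follows from the uniqueness clauses of those schemes.
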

\begin{proof}
It is a consequence of Proposition \ref{cart}.
\end{proof}
Now we have the following results related to the concept of cocommutative
comonoid, given in Appendix 2, that were first stated in \citep{Rom=0000E1n-Par=0000E9}.
We won't mention the subscripts of $\delta$ and $\tau$ when they
are obvious and $n$ will be greater than $2$ for the following.
\begin{prop}
$(N_{i},\delta,\tau)$ are cocommutative comonoids in $\mathcal{FR}^{n}$
for all $i=0,1,...,n-1$.\end{prop}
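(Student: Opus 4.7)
The plan is to verify, for each $i$, the three cocommutative-comonoid axioms for $(N_i,\delta,\tau)$: coassociativity $(\delta\otimes N_i)\circ\delta=a\circ(N_i\otimes\delta)\circ\delta$, counitality $(\tau\otimes N_i)\circ\delta=l^{-1}$ and its mirror $(N_i\otimes\tau)\circ\delta=r^{-1}$, and cocommutativity $\sigma\circ\delta=\delta$. The key tool throughout is the uniqueness clauses of the recursive characterisations $SRR_k$ and $FR_k$ of Definition \ref{CN}, combined with the functoriality of the constructions of $\delta$ and $\tau$ under the coercion functors $T$ and $G$ given in Proposition \ref{cart}.

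I would first settle the case $N_{k+1}$ for $k\geq 0$; the case $i=0$ then follows by applying the SM coercion functor $T_1$, which sends $N_1$ to $N_0$ and preserves $\otimes$, $\top$, $a$, $l$, $r$, $\sigma$, so that every equation established at $N_1$ descends to $N_0$. For cocommutativity, I would check that $\sigma_{N_{k+1},N_{k+1}}\circ\delta_{N_{k+1}}$ satisfies the same $SRR_k$ diagram as $\delta_{N_{k+1}}$ itself, by naturality of $\sigma$: $\sigma\circ(0_k\otimes 0_k)=0_k\otimes 0_k$ after unitor adjustment and $\sigma\circ(s_k\otimes s_k)=(s_k\otimes s_k)\circ\sigma$; uniqueness of $SRR_k$ then identifies the two. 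For counitality, both $l\circ(\tau\otimes N_{k+1})\circ\delta_{N_{k+1}}$ and $\mathrm{id}_{N_{k+1}}$ agree after precomposition with $0_{k+1}\otimes\top$ and $s_{k+1}\otimes\top$, as follows from the defining identities $\delta\circ 0_{k+1}=(0_{k+1}\otimes 0_{k+1})\circ l^{-1}$, $\delta\circ s_{k+1}=(s_{k+1}\otimes s_{k+1})\circ\delta$ together with $\tau\circ 0_{k+1}=1_{\top}$ and $\tau\circ s_{k+1}=\tau$, the latter coming from uniqueness of the $SRR_k$-solution characterising $\tau$. Invoking the coproduct-style uniqueness $FR_k$ for morphisms out of $N_{k+1}\otimes\top$ yields the equality, and the right counit is symmetric.

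Coassociativity is the most delicate axiom. The plan is to exhibit both $(\delta_{N_{k+1}}\otimes N_{k+1})\circ\delta_{N_{k+1}}$ and $a\circ(N_{k+1}\otimes\delta_{N_{k+1}})\circ\delta_{N_{k+1}}$ as solutions of one single $SRR_k$ diagram with target $N_k\otimes N_k\otimes N_k$, initial datum $0_k\otimes 0_k\otimes 0_k$ and step $s_k\otimes s_k\otimes s_k$; to do this one works one level down via the factorisation $\delta_{N_{k+1}}=G_k(f\circ r^{-1})$, so that the comparison is between morphisms $N_{k+1}\otimes\top\to N_k^{\otimes 3}$. Expanding each composite twice using the recursion equations of $f$, the naturality of $a$ and the compatibility of $G_k$ with $\otimes$ identifies both with the unique solution provided by $SRR_k$. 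I expect the main obstacle to lie exactly in this bookkeeping: keeping track of the associator $a$ as it commutes past the iterated $s\otimes s\otimes s$, and transporting the identity between the bare $f$ living on $N_{k+1}\otimes\top$ and the actual $\delta$ obtained by applying $G_k$. Once those rearrangements are carried out carefully, the uniqueness clause of $SRR_k$ yields coassociativity, and the proof is complete.
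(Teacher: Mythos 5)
Your strategy (a hands-on verification of the three comonoid laws inside $\mathcal{FR}^{n}$ using uniqueness of the recursion schemes) is not the paper's route: the paper obtains this proposition as an immediate consequence of the already established facts that $\mathcal{FR}^{n}$ is cartesian (via Proposition \ref{cart} and Theorem \ref{transmono}) and that in a cartesian SM category every object carries a uniquely determined cocommutative comonoid structure, with $\tau$ the unique arrow to the terminal unit $\top$ and $\delta=\left\langle id,id\right\rangle$ (Appendix 2). Of your three verifications, only cocommutativity is solid: $\sigma\circ f$ solves the same $SRR_{k}$ data as the arrow $f$ with $f\circ(0_{k+1}\otimes\top)=0_{k}\otimes0_{k}$ and step $s_{k}\otimes s_{k}$, so $\sigma\circ f=f$, and the SM functor $G_{k}$ transports this to $\sigma\circ\delta=\delta$.

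The counit and coassociativity steps have a genuine gap. Put $u=l\circ(\tau_{N_{k+1}}\otimes N_{k+1})\circ\delta_{N_{k+1}}$. Your computations give $u\circ0_{k+1}=0_{k+1}$ and $u\circ s_{k+1}=s_{k+1}\circ u$, whereas $id$ satisfies $id\circ s_{k+1}=s_{k+1}$; these are \emph{different} recursion data unless $u=id$ is already known. The uniqueness clause of $FR$ (and of its $G$-raised versions $FR_{k}$) is a case-analysis/coproduct principle in which the successor leg $h$ is a fixed arrow not depending on the solution, so "both maps agree after precomposition with $0\otimes\top$ and $s\otimes\top$" is exactly the statement to be proved, not something you can feed into $FR_{k}$. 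What you would need is an $SRR$/$SDR$-style induction whose recursion variable is $N_{k+1}$ and whose target is $Y=N_{k+1}$ (or $N_{k+1}^{\otimes3}$ for coassociativity), and this is precisely forbidden by the ramification side condition $T_{k}\ldots T_{0}Y\cong\top$ of Definition \ref{CN}. The coassociativity plan also fails to typecheck as stated: the two triple composites land in $N_{k+1}^{\otimes3}$, and they are not $G_{k}$-images of composable level-$k$ arrows (the arrows $(f\circ r^{-1})\otimes N_{k}$ and $f\circ r^{-1}$ are not composable before $G_{k}$ is applied), so "working one level down" does not by itself place both candidates inside a single $SRR_{k}$ diagram with target $N_{k}^{\otimes3}$. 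The way to close the argument is the paper's: the $N_{i}$ are cartesian objects, hence carry eraser and duplication satisfying the hypotheses of Theorem \ref{transmono}; $\mathcal{FR}^{n}$ is therefore cartesian, and then the comonoid equations follow from terminality of $\top$ and the universal property of the cartesian product rather than from a level-violating induction.
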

\begin{cor}
$(N_{i}^{k},\delta,\tau)$ are cocommutative comonoids in $\mathcal{FR}^{n}$
for all $k\in\mathbf{N}$ and for all $i=0,1,...,n-1$.\end{cor}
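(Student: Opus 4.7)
The plan is to argue by induction on $k$, with the preceding Proposition supplying the case $k=1$ and the case $k=0$ handled by taking $N_i^0=\top$ equipped with the trivial comonoid structure $(1_\top,1_\top)$.

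For the inductive step, I would invoke the standard fact that in any symmetric monoidal category, the tensor product of two cocommutative comonoids $(A,\delta_A,\tau_A)$ and $(B,\delta_B,\tau_B)$ is again a cocommutative comonoid under
\[
\delta_{A\otimes B}\;:\; A\otimes B\xrightarrow{\delta_A\otimes\delta_B}(A\otimes A)\otimes(B\otimes B)\xrightarrow{\cong}(A\otimes B)\otimes(A\otimes B),
\]
with the rightmost isomorphism the ``middle four interchange'' assembled from $a$ and $\sigma$, and counit
\[
\tau_{A\otimes B}\;:\; A\otimes B\xrightarrow{\tau_A\otimes\tau_B}\top\otimes\top\xrightarrow{l}\top.
\]
Applied with $A=N_i^k$ (by induction) and $B=N_i$ (by the Proposition), this produces the desired cocommutative comonoid structure on $N_i^{k+1}=N_i^k\otimes N_i$.

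The verification of the three comonoid axioms on $N_i^{k+1}$---coassociativity, the two counit laws, and cocommutativity---reduces, via naturality of $\sigma$ and $a$ together with Mac Lane coherence, to the same axioms holding separately on each factor $N_i^k$ and $N_i$. Moreover, since $N_i^{k+1}$ is itself a cartesian object in the sense of Definition \ref{CN}, the morphisms produced by the construction above agree, by the uniqueness argument used at the end of the proof of Proposition \ref{cart}, with the canonical $\delta$ and $\tau$ furnished by that Proposition; this reconciles the two ways of naming these structure maps and ensures that the notation $(N_i^k,\delta,\tau)$ is unambiguous.

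The only real obstacle is the coherence bookkeeping needed to unpack the middle-four interchange and to confirm that each comonoid axiom factors cleanly across the tensor decomposition. This is routine and, crucially, relies purely on the symmetric monoidal structure of $\mathcal{FR}^n$; none of the recursion schemes of Definition \ref{CN} are invoked in the inductive step.
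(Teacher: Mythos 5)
Your proposal is correct and follows essentially the route the paper intends: the paper leaves the Corollary's proof implicit, relying on the Proposition together with the fact (recalled in the Remark of Appendix 2, citing Fox) that in a \emph{SM} category the tensor product of cocommutative comonoids is again a cocommutative comonoid, which is exactly the inductive step you spell out. Your additional check that the resulting $\delta$ and $\tau$ agree with those of Proposition \ref{cart} is a harmless (and welcome) piece of bookkeeping, not a departure from the paper's argument.
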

\begin{thm}
The tensor product of two cartesian objects in $\mathcal{FR}^{n}$
is a cartesian product.\end{thm}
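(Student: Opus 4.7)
The plan is to realize $X \otimes Y$ as the categorical product of any two cartesian objects $X, Y \in \mathcal{FR}^{n}$ by producing explicit projection and pairing morphisms out of the comonoid data already available on cartesian objects.

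First, I would apply Proposition \ref{cart} to obtain eraser morphisms $\tau_X : X \to \top$, $\tau_Y : Y \to \top$ and duplication morphisms $\delta_X, \delta_Y$, and then define the candidate projections
\[
\pi_1 := r \circ (id_X \otimes \tau_Y) : X \otimes Y \longrightarrow X, \qquad \pi_2 := l \circ (\tau_X \otimes id_Y) : X \otimes Y \longrightarrow Y.
\]
For any object $Z$ in $\mathcal{FR}^{n}$---which is itself a cartesian object, since every object in the free SM n-Comprehension with Recursion is built by tensoring copies of the $N_i$'s and $\top$---the pairing of $f : Z \to X$ and $g : Z \to Y$ is to be taken as $\langle f, g \rangle := (f \otimes g) \circ \delta_Z$.

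Second, the triangle identities $\pi_1 \circ \langle f, g \rangle = f$ and $\pi_2 \circ \langle f, g \rangle = g$ should drop out from naturality of $\tau$ (so that $\tau_Y \circ g$ is forced to equal $\tau_Z$, any morphism into $\top$ being absorbed) and naturality of the unitor $r$, ultimately reducing to the counit diagram of Theorem \ref{transmono}, namely $r \circ (id_Z \otimes \tau_Z) \circ \delta_Z = id_Z$. This step is routine once the comonoid structure is in place.

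Third, and this is where the main work lies, I would establish uniqueness. Given any $h : Z \to X \otimes Y$ with $\pi_1 \circ h = f$ and $\pi_2 \circ h = g$, naturality of $\delta$ gives
\[
(f \otimes g) \circ \delta_Z = (\pi_1 \otimes \pi_2) \circ (h \otimes h) \circ \delta_Z = (\pi_1 \otimes \pi_2) \circ \delta_{X \otimes Y} \circ h,
\]
so that the entire matter reduces to verifying $(\pi_1 \otimes \pi_2) \circ \delta_{X \otimes Y} = id_{X \otimes Y}$. The technical obstacle is precisely this equality: one expands $\delta_{X \otimes Y}$ using the monoidality of $\delta$ as $(id_X \otimes \sigma \otimes id_Y) \circ (\delta_X \otimes \delta_Y)$, and then applies the counit diagrams on each tensor factor together with the SM coherence axioms to collapse the composite to the identity. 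This is essentially the standard argument that a symmetric monoidal category in which every object carries a compatible cocommutative comonoid structure has its tensor coinciding with the categorical product, and with it the universal property of $X \otimes Y$ as $X \times Y$ is complete.
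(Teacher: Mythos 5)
Your proposal is correct and follows essentially the same route as the paper: the paper's proof is the one-line observation that all cartesian objects in $\mathcal{FR}^{n}$ are cocommutative comonoids (via Proposition \ref{cart}), with the universal property then supplied by the standard Fox-type argument recorded in Appendices 2 and 3, where the projections $r\circ(id\otimes\tau)$, $l\circ(\tau\otimes id)$ and the pairing $(f\otimes g)\circ\delta$ are exactly yours. You have simply unpacked that cited argument, including the uniqueness step via monoidality of $\delta$ and the counit laws, so no further comment is needed.
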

\begin{proof}
All cartesian objects in $\mathcal{FR}^{n}$ are cocommutative comonoids.
\end{proof}
It's important here to note that this Theorem allowed us to introduce
$SDR$ diagrams in $\mathcal{FR}^{n}$ as it was seen in Proposition
\ref{cart}.

\section{The standard model}

The\emph{ Freyd} \emph{Cover}, technique that we will use to prove
some properties of the syntactical structures defined up to now, is
a particular case of the following Definition.
\begin{defn}
Given a functor $\Gamma:\mathcal{C}\longrightarrow Set$ we call \emph{Artin
Glueing} the comma category $\nicefrac{Set}{\Gamma}$ generated from
$\Gamma$: 
\begin{itemize}
\item whose objects are groups of three $(X,f,U)$ where 

\begin{itemize}
\item $X$ is a set
\item $U$ is an an object of $\mathcal{C}$
\item $f$ is a function $X\longrightarrow\Gamma U$
\end{itemize}
\item whose morphisms between the objects $(X,f_{1},U)$ and $(Y,f_{2},V)$
are commutative squares

\[
\xymatrix{X\ar[r]^{h_{1}}\ar[d]_{f_{1}} & Y\ar[d]^{f_{2}}\\
\Gamma U\ar[r]_{\Gamma h_{2}} & \Gamma V
}
\]
that is, ordered pairs $(h_{1},h_{2})$ where $X\overset{h_{1}}{\longrightarrow}Y$
and $U\overset{h_{2}}{\longrightarrow}V$.

\end{itemize}
\begin{defn}
If $\mathcal{C}$ is a category with a terminal object $1$ its \emph{Freyd
Cover} is the Artin Glueing for the functor $\Gamma=\mathcal{C}(1,-)$.
\end{defn}
\end{defn}
Morphisms in $\mathcal{FR}^{n}$ that we will call \emph{formal},
because of their resemblance with the terms in the formal languages,
can be identified with programs generated in that category. 
\begin{defn}
\label{gamma}The\emph{ standard model of formal morphisms} is the
functor $\Gamma_{n}$ given by the diagram
\[
\xymatrix{\mathcal{FR}^{n}\ar[rr]^{\Gamma_{n}}\ar[dr]_{\overline{\chi}} &  & n\multimap Set\\
 & n\multimap\mathcal{FR}^{n}\ar[ur]_{n\multimap\Gamma}
}
\]
that is $\Gamma_{n}=(n\multimap\Gamma)\circ\overline{\chi}$ where
$\Gamma:\mathcal{FR}^{n}\longrightarrow Set$ is defined by $\Gamma X=\mathcal{F}^{n}(\top,X)$
and $\Gamma f=f\circ-$.%
\footnote{This is a special case of the \emph{global sections functor}.%
}

Taking into account that the functor $\Gamma_{n}$ acts over the objects
$\top$ and $N_{n-1}$ in $\mathcal{FR}^{n}$ as
\[
\Gamma_{n}\top=1\longrightarrow1\longrightarrow\ldots\longrightarrow1
\]
and 
\[
\Gamma_{n}N_{n-1}=\mathbf{N}_{n-1}\longrightarrow\mathbf{N}_{n-1}\longrightarrow\ldots\longrightarrow\mathbf{N}_{n-1}
\]
where the arrows are identities, its expressions over the elements
in $\mathcal{FR}^{n}$ are:\end{defn}
\begin{itemize}
\item over the objects $N_{j}$ in $\mathcal{\mathcal{FR}}^{n}$ for $0\leq j\leq n-2$
we have 
\[
\Gamma_{n}N_{j}=[(n\multimap\Gamma)\circ\overline{\chi}](N_{j})
\]
being equal to $n\multimap\Gamma$ applied to%
\footnote{We point here that we have the following identities:
\[
\overline{k}N_{j}=\begin{cases}
\top & \textrm{ if }0\leq j\leq k-1\\
N_{n-1} & \textrm{ other }
\end{cases}\qquad\overline{k}\mathbf{N}_{j}=\begin{cases}
1 & \textrm{ if }0\leq j\leq k-1\\
\mathbf{N}_{n-1} & \textrm{\textrm{ other }}
\end{cases}
\]
}
\[
\xymatrix{\overline{0}N_{j}\ar[r] & \overline{1}N_{j}\ar[r] &  & \cdots & \textrm{ }\ar[r] & \overline{n-2}N_{j}\textrm{ }\ar[r] & \overline{n-1}N_{j}\textrm{ }}
\]
and giving
\[
\xymatrix{\overline{0}\mathbf{N}_{j}\ar[r] & \overline{1}\mathbf{N}_{j}\ar[r] &  & \cdots & \textrm{ }\ar[r] & \overline{n-2}\mathbf{N}_{j}\textrm{ }\ar[r] & \overline{n-1}\mathbf{N}_{j}\textrm{ }}
\]
This is both an object in $n\multimap Set$ and a function composition
in $Set$.%
\footnote{In terms of sequences out of $1$ and $\mathbf{N}$ we had $n-1$
chains of commutative squares.%
}
\item over morphisms $f:N_{k}\longrightarrow N_{j}$ in $\mathcal{\mathcal{FR}}^{n}$
for $1\leq k,j\leq n-2$ it is represented by commutative squares
in the form 
\[
\Gamma_{n}f=[(n\multimap\Gamma)\circ\overline{\chi}](f)=(n\multimap\Gamma)(\overline{\chi}N_{k}\longrightarrow\overline{\chi}N_{j})
\]
giving {\footnotesize{}
\[
\xymatrix{\overline{0}\mathbf{N}_{k}\ar[dd]_{\overline{0}\chi_{0}\mathbf{N}_{k}}\ar[rrr]^{\overline{0}f} &  &  & \overline{0}\mathbf{N}_{j}\ar[dd]^{\overline{0}\chi_{0}\mathbf{N}_{j}}\\
\\
\overline{1}\mathbf{N}_{k}\ar[dd]_{\overline{1}\chi_{1}\mathbf{N}_{k}}\ar[rrr]^{\overline{1}f} &  &  & \overline{1}\mathbf{N}_{j}\ar[dd]^{\overline{1}\chi_{1}\mathbf{N}_{j}}\\
\\
\textrm{ } &  &  & \textrm{ }\\
\vdots &  &  & \vdots\\
\textrm{ }\ar[d] &  &  & \textrm{ }\ar[d]\\
\overline{n-2}\mathbf{N}_{k}\ar[dd]_{\overline{n-2}\chi_{n-2}\mathbf{N}_{k}}\ar[rrr]^{\overline{n-2}f} &  &  & \overline{n-2}\mathbf{N}_{j}\ar[dd]^{\overline{n-2}\chi_{n-2}\mathbf{N}_{j}}\\
\\
\overline{n-1}\mathbf{N}_{k}\ar[rrr]^{\overline{n-1}f} &  &  & \overline{n-1}\mathbf{N}_{j}
}
\]
}This is both an arrow in $n\multimap Set$ and a chain of commutative
squares in $Set$.%
\footnote{In terms of sequences out of $1$ and $\mathbf{N}$ we had $n-1$
chains of commutative cubes.%
}
\end{itemize}
In a more general case we could consider objects in the form $N_{j}^{\alpha_{j}}$.
In that case, given that the endofunctors $T$ and $G$ preserve the
tensor product, we had chains in the form
\[
\overline{0}\mathbf{N}_{j}\otimes\overset{\alpha_{j}}{\cdots}\otimes\overline{0}\mathbf{N}_{j}\rightarrow...\rightarrow\overline{n-1}\mathbf{N}_{j}\otimes\overset{\alpha_{j}}{\cdots}\otimes\overline{n-1}\mathbf{N}_{j}
\]
and an analogous expression for the morphisms. We will work \emph{modulo
tensor powers} due to the enormous length of those expressions.
\begin{defn}
In the case of the $n$-Comprehension $n\multimap Set$ the Freyd
Cover of $\mathcal{\mathcal{FR}}^{n}$ is given by the comma category
$\nicefrac{(n\multimap Set)}{\Gamma_{n}}$ with the functor $\Gamma_{n}:\mathcal{\mathcal{FR}}^{n}\longrightarrow n\multimap Set$
whose 
\begin{itemize}
\item objects are triples $(X,f,U)$ where

\begin{itemize}
\item $X$ is an object of $n\multimap Set$, that is, a chain in the form
\[
X_{0}\rightarrow X_{1}\rightarrow...\rightarrow X_{n-1}
\]

\item $U$ is an object of $\mathcal{\mathcal{FR}}^{n}$, that is, the tensor
product of distinct tensor powers of objects $N_{k}$ in the form
\[
\overset{n-1}{\underset{j=0}{\bigotimes}}N_{j}^{\alpha_{j}}
\]
 
\item $f$ is a function $X\longrightarrow\Gamma_{n}U$ in $n\multimap Set$,
that is, a chain of squares
\end{itemize}
\item morphisms between objects $(X,f_{1},U)$ and $(Y,f_{2},V)$ are commutative
squares

\[
\xymatrix{X\ar[r]^{h_{1}}\ar[d]_{f_{1}} & Y\ar[d]^{f_{2}}\\
\Gamma_{n}U\ar[r]_{\Gamma_{n}h_{2}} & \Gamma_{n}V
}
\]
that is, pairs $(h_{1},h_{2})$ where $X\overset{h_{1}}{\longrightarrow}Y$
belongs to $n\multimap Set$ and $U\overset{h_{2}}{\longrightarrow}V$
to $\mathcal{\mathcal{FR}}^{n}$ and therefore $\xymatrix{\Gamma_{n}U\ar[r]^{\Gamma_{n}h_{2}} & \Gamma_{n}V}
$ also belongs to $n\multimap Set$.

Those squares can be seen as chains of commutative cubes in $n\multimap Set$.

\end{itemize}
\end{defn}
To complete this Section we give two results connecting the syntactical
structure here described with the semantics of numerical functions. 
\begin{prop}
The image of the objects $N_{k}$ by the functor $\Gamma$ are sets
whose elements have the form $\Gamma N_{k}=\{std_{k}n/n\in\mathbf{N}\}$
where $std_{k}:\mathbf{N}\longrightarrow\Gamma N_{k}$ is defined
by the scheme 
\[
\begin{cases}
std_{k}0=0_{k}\\
std_{k}(sn)=s_{k}(std_{k}n)
\end{cases}
\]
with $k=0,1,...,n-1$. \end{prop}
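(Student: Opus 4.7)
The plan is to prove the equality $\Gamma N_k = \{std_k n : n \in \mathbf{N}\}$ by a double inclusion, exploiting the freeness of $\mathcal{FR}^n$ for the harder direction. Recall that $\Gamma N_k = \mathcal{FR}^n(\top, N_k)$ consists of the global sections of $N_k$.

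For the inclusion $\supseteq$ I would argue by induction on $n \in \mathbf{N}$. The base case $std_k 0 = 0_k$ is a morphism $\top \to N_k$ in $\mathcal{FR}^n$ because every SM $n$-Comprehension with Recursion comes equipped with the initial diagram $\top \overset{0_k}{\longrightarrow} N_k \overset{s_k}{\longrightarrow} N_k$. The inductive step is immediate: $std_k(sn) = s_k \circ std_k n$ is the composition in $\mathcal{FR}^n$ of $s_k : N_k \to N_k$ with $std_k n : \top \to N_k$, so it lies in $\Gamma N_k$ as well.

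For the inclusion $\subseteq$, which is the substantive part, I would use the universal property of $\mathcal{FR}^n$ as the \emph{free} SM $n$-Comprehension with Recursion. Concretely, I would exhibit $Set$ — or a suitable concrete SM $n$-Comprehension with Recursion built from it — as a target model in which the role of $N_k$ is played by $\mathbf{N}$, with the zero and successor of the initial diagram interpreted as $0$ and $s$ on $\mathbf{N}$. By initiality, there is a unique structure-preserving functor $F : \mathcal{FR}^n \to Set$, and any $g \in \mathcal{FR}^n(\top, N_k)$ is sent to some natural number $n = F(g)$. The uniqueness clauses of $FR$, $SRR_k$ and $SDR_k$ then force $g = std_k n$ on the syntactic side, because both $g$ and $std_k n$ must solve the same universal recursion problem.

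The main obstacle will be the rigorous execution of the freeness argument: one must verify that the chosen target carries a bona fide SM $n$-Comprehension with Recursion structure — in particular that the coercion functors $T_k, G_k$, their transformations $\eta_k, \epsilon_k$, and the three recursion schemes are all correctly interpreted — and then perform a structural induction on how $g$ is built from the generators, compositions, tensors and recursion diagrams, collapsing each case to some $s_k^n \circ 0_k$. An alternative more syntactic route is to induct directly on the term structure of $g$ inside the free category, treating each generating clause of $\mathcal{FR}^n$ separately and showing that the only morphisms $\top \to N_k$ produced at each stage are of the required form.
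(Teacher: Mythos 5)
The easy inclusion $\supseteq$ is fine, but the key step of your hard direction has a genuine gap. From the existence of a structure-preserving functor $F:\mathcal{FR}^{n}\to Set$ with $F(N_{k})=\mathbf{N}$ you only learn the \emph{denotation} $F(g)=n$ of a global section $g:\top\to N_{k}$; nothing in that data forces the syntactic identity $g=std_{k}n=s_{k}^{n}\circ 0_{k}$ in the free category. Your appeal to ``the uniqueness clauses of $FR$, $SRR_{k}$ and $SDR_{k}$'' does not do this work: those clauses identify two morphisms only when both are exhibited as fillers of one and the same recursion diagram, whereas an arbitrary $g:\top\to N_{k}$ is just some composite of generators and is not presented as the solution of any recursion problem. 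A plain model in $Set$ cannot detect such syntactic facts. Your fallback, a direct structural induction on the term structure of $g$, meets the usual obstacle: $g$ may factor as $h\circ j$ through an arbitrary intermediate object $X$, so an inductive hypothesis stated only for morphisms $\top\to N_{k}$ is too weak and must be strengthened to an invariant about \emph{all} morphisms, which your proposal does not formulate.

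The paper's (admittedly terse) argument is exactly the device that repairs this: the Freyd Cover. One glues $n\multimap Set$ over $\mathcal{FR}^{n}$ along the standard-model functor $\Gamma_{n}$, i.e.\ forms the comma category $\nicefrac{(n\multimap Set)}{\Gamma_{n}}$ constructed in the preceding section, checks that this glued category again carries the structure of an SM $n$-Comprehension with Recursion in which the object lying over $N_{k}$ can be taken with set-part the numerals $\{s_{k}^{m}\circ 0_{k}\,/\,m\in\mathbf{N}\}$, and then uses the freeness of $\mathcal{FR}^{n}$ to obtain a structure-preserving section of the projection back to $\mathcal{FR}^{n}$. Chasing an arbitrary global section $g:\top\to N_{k}$ through this section shows it lies in the image of $std_{k}$, which is precisely the paper's concluding remark that in the Freyd Cover every arrow $\top\to N_{k}$ has the form $s_{k}^{m}\circ 0_{k}$. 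This glueing is the systematic way the composition obstacle in your structural induction is overcome; as it stands, your proposal stops short of it.
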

\begin{cor}
$\Gamma N_{k}=\mathbf{N}_{k}$ for all $k=0,1,...,n-1$.
\end{cor}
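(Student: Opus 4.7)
The plan is to upgrade the preceding Proposition, which already identifies $\Gamma N_k$ as the image of the map $std_k:\mathbf{N}\longrightarrow\Gamma N_k$, into the claimed equality by showing that $std_k$ is a bijection and then matching its codomain with the level-$k$ naturals $\mathbf{N}_k$ in the sense of the introduction. Surjectivity is already in hand, so the core work is to prove injectivity of $std_k$; the identification with $\mathbf{N}_k$ is then a definitional bookkeeping step, since $\mathbf{N}_k$ is by construction $\mathbf{N}$ regarded as the $k$-th level copy of the natural numbers, with $0_k$ and $s_k$ as its constructors.

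For injectivity I would exploit the observation made in the footnote accompanying the $FR$ clause of Definition \ref{CN}: the diagram $FR_k$ presents $N_k$ as a coproduct $\top+N_k$ with coprojections $0_k$ and $s_k$. From this coproduct presentation two consequences follow, each proved by a single application of the universal property of $FR_k$: (a) no global section of $N_k$ factors simultaneously through $0_k$ and through $s_k\circ(-)$, obtained by defining, via $FR_k$, a ``characteristic'' arrow $N_k\longrightarrow A$ into an object $A$ with two distinct global sections (we may take $A=N_0$ with the globals $0_0$ and $s_0\circ 0_0$, distinct by the same coproduct property at level $0$); and (b) $s_k\circ(-):\Gamma N_k\longrightarrow\Gamma N_k$ is injective, obtained by constructing a predecessor arrow $p:N_k\longrightarrow N_k$ with $p\circ 0_k=0_k$ and $p\circ s_k=id$ via $FR_k$ with $g=0_k$ and $h=\pi_1$. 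A routine induction on $\min(n,m)$ then shows that $std_k\,n=std_k\,m$ forces $n=m$: the base case uses (a), the successor case uses (b) to reduce to the previous step.

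Once $std_k$ is a bijection $\mathbf{N}\longrightarrow\Gamma N_k$, the identification with $\mathbf{N}_k$ is immediate from the recursive definition of $std_k$: it is the unique set-theoretic map sending $0\mapsto 0_k$ and intertwining $s$ with $s_k$, which is precisely the canonical isomorphism between $\mathbf{N}$ and its level-$k$ copy $\mathbf{N}_k$. The main obstacle is the bootstrap inside (a), namely the need to exhibit, independently of the recursion at level $k$, an object with two provably distinct global sections; once this is extracted from the $FR$ coproduct at level $0$, all subsequent levels inherit the separation property by applying the coercion functors $G_{k-1}\cdots G_0$, and the rest of the argument is a standard Peano-style induction.
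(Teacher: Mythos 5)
Your reduction of the Corollary to the bijectivity of $std_{k}$ is reasonable, and your step (b) (a predecessor $p$ with $p\circ(s_{k}\otimes\top)=r$ built from $FR_{k}$, giving injectivity of post-composition with $s_{k}$ on global sections) is fine as far as it goes. The genuine gap is in step (a), and you have in fact put your finger on it yourself: you need an object of $\mathcal{FR}^{n}$ with two \emph{provably distinct} global sections, and you claim that $0_{0}$ and $s_{0}\circ 0_{0}$ are ``distinct by the same coproduct property at level $0$.'' That inference does not hold. The $FR$ clause of Definition \ref{CN} only asserts the existence and uniqueness of a mediating arrow for the cospan $(0_{0}\otimes X,\,s_{0}\otimes X)$; it does not assert that the two coprojections have disjoint images, nor that the ambient hom-sets are non-trivial. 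A degenerate model (for instance a preorder-like SM category in which all parallel arrows coincide) satisfies every closure condition of $\mathcal{CR}^{n}$, including $FR$, $SRR_{k}$ and $SDR_{k}$, yet has $0_{0}=s_{0}\circ 0_{0}$. Hence no amount of internal manipulation of the recursion diagrams can bootstrap the separation you need; distinctness of numerals is a \emph{consistency} statement about the free structure and must come from outside the syntax.

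This is exactly what the paper's standard-model machinery is for: because $\mathcal{FR}^{n}$ is free, there is a structure-preserving functor into a concrete non-degenerate model (the $n\multimap Set$ comprehension of Example \ref{ex}, organized via the Freyd Cover and the functor $\Gamma_{n}$), under which $s_{k}^{n}\circ 0_{k}$ is sent to the numeral $n$; distinct numerals are therefore distinct already in $\mathcal{FR}^{n}$, which is the injectivity of $std_{k}$. The Freyd Cover simultaneously yields the surjectivity statement the Proposition records, namely that \emph{every} arrow $\top\longrightarrow N_{k}$ is of the form $s_{k}^{n}\circ 0_{k}$, whence $\Gamma N_{k}=\{std_{k}n\}\cong\mathbf{N}_{k}$. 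So your outline can be repaired, but only by replacing the internal ``coproduct separation'' in (a) with an appeal to initiality plus the standard model (equivalently, the gluing construction the paper has just set up); as written, step (a) is circular and the induction built on it does not get off the ground.
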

This Proposition and its Corollary indicate that the sets generated
by the functor $\Gamma$ applied to the levels of the natural numbers
in $\mathcal{\mathcal{FR}}^{n}$ behave as the natural numbers themselves.
This fact is a consequence of the use of the Freyd Cover, where every
arrow $\top\longrightarrow N_{k}$ has the form $s_{k}^{n}\circ0_{k}$
for some $n\in\mathbf{N}$.

\section{Recursive functions in $\mathcal{FR}^{n}$}

To show how hierarchies of subrecursive functions can be defined in
$\mathcal{FR}^{n}$ we introduce a language containing \emph{n} different\emph{
species of variables}, separated by semicolons, which we will denote
by the numbers $0,1,...,n-1$. We assign at the same time a \emph{level}
to every function as is explained in the following:
\begin{itemize}
\item we say that a function $f$ is \emph{of the type} $(a_{k},a_{k-1},...,a_{0};a_{m})$
if its arguments belong to the species $a_{k},a_{k-1},...,a_{0}$
in its domain and its codomain belong to the species $a_{m}$. We
express this fact by 
\[
\underset{a_{k}a_{k-1}...a_{0};a_{m}}{f}
\]

\item we define the \emph{level of a function} as the species of its codomain.
\end{itemize}
If a variable belongs to the \emph{n}-th species then it also belongs
to the \emph{(n+1)}-th species.

Now we need to make use of a new recursion scheme in $\mathcal{\mathcal{F}R}^{n}$
with \emph{$n>2$ }that will turn out to be a particular instance
of $SDR$ scheme.
\begin{defn}
\noindent We say that a morphism $f:N_{k+1}\otimes X\longrightarrow Y$
in $\mathcal{\mathcal{FR}}^{n}$ with \emph{$n>2$} is defined by
the \emph{parameterized safe ramified recursion scheme} on the level
$k$ if it is the unique such that for all $g:X\longrightarrow Y$
and $h:X\otimes Y\longrightarrow Y$ with $T_{k}...T_{0}Y$ isomorphic
to $\top$ the following diagram
\[
\xymatrix{\top\otimes X\ar[rr]^{0_{k+1}\otimes X}\ar[d]_{l} &  & N_{k+1}\otimes X\ar[rr]^{s_{k+1}\otimes X}\ar[d]^{\pi_{1},f} &  & N_{k+1}\otimes X\ar[d]^{f}\\
X\ar[rr]_{id,g} &  & X\otimes Y\ar[rr]_{h} &  & Y
}
\]
commutes. We denote $f$ by $PSRR{}_{k}(g,h)$.\end{defn}
\begin{thm}
Every function defined using a $PSRR$ scheme can also be defined
using a $SDR$ scheme.
\end{thm}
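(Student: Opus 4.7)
The plan is to exhibit PSRR as a special case of SDR by choosing a dependent step function that discards its $N_{k+1}\otimes X$ argument apart from the $X$-coordinate. Given PSRR data $g\colon X\to Y$ and $h\colon X\otimes Y\to Y$ with $T_{k}\cdots T_{0}Y\cong\top$, I would set
\[
\tilde{g}=g,\qquad \tilde{h}= h\circ(\pi_{X}\otimes\mathrm{id}_{Y})\colon (N_{k+1}\otimes X)\otimes Y\longrightarrow Y,
\]
where $\pi_{X}=l\circ(\tau_{N_{k+1}}\otimes X)\colon N_{k+1}\otimes X\to X$ is the canonical projection. The eraser $\tau_{N_{k+1}}$ exists by Proposition~\ref{cart} (since $n>2$ and $N_{k+1}$ is cartesian), and the diagonal $\langle\mathrm{id},f\rangle\colon N_{k+1}\otimes X\to(N_{k+1}\otimes X)\otimes Y$ used in the SDR diagram exists because $N_{k+1}\otimes X$ is a cartesian object. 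So the hypotheses of $SDR_{k}$ are met for $(\tilde g,\tilde h)$.

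Let $f:=SDR_{k}(\tilde{g},\tilde{h})$. I would then verify that this same $f$ satisfies the PSRR diagram. For the recursion step,
\[
f\circ(s_{k+1}\otimes X) \;=\; \tilde{h}\circ\langle\mathrm{id},f\rangle \;=\; h\circ(\pi_{X}\otimes\mathrm{id}_{Y})\circ\langle\mathrm{id},f\rangle \;=\; h\circ\langle\pi_{X},f\rangle,
\]
which is precisely the right-hand square of the PSRR diagram. For the base case, one has $\pi_{X}\circ(0_{k+1}\otimes X)=l$ (see below), hence
\[
f\circ(0_{k+1}\otimes X) \;=\; \tilde{h}\circ\langle 0_{k+1}\otimes X,\, g\circ l\rangle \;=\; h\circ\langle l,\,g\circ l\rangle \;=\; h\circ\langle\mathrm{id},g\rangle\circ l,
\]
matching the left triangle of the PSRR diagram. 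Uniqueness of the morphism satisfying the PSRR diagram then yields $f=PSRR_{k}(g,h)$, so $PSRR_{k}(g,h)=SDR_{k}(\tilde{g},\tilde{h})$.

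The only non-trivial bookkeeping is the identity $\pi_{X}\circ(0_{k+1}\otimes X)=l$, equivalently $\tau_{N_{k+1}}\circ 0_{k+1}=\mathrm{id}_{\top}$. This is immediate from the base case of the $SRR_{k}$ diagram used to construct $\tau_{N_{k+1}}$ in Proposition~\ref{cart}, together with $\tau_{\top}=\mathrm{id}_{\top}$. With that identity in hand, the computation above is routine, and no further categorical work is needed. The main (mild) obstacle is organising the notation: tracking which $\pi_{1}$ denotes $l$ on $\top\otimes X$ and which denotes $\pi_{X}$ on $N_{k+1}\otimes X$, and checking that the cartesian pairings referenced in both recursion schemes are the ones delivered by Proposition~\ref{cart}.
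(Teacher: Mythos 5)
The paper states this theorem without giving a proof, so there is nothing to compare against directly; judged on its own terms, your reduction is the natural one and its skeleton is sound: take $\tilde h=h\circ(\pi_X\otimes\mathrm{id}_Y)$ with $\pi_X=l\circ(\tau_{N_{k+1}}\otimes X)$, form $SDR_k(g,\tilde h)$, and check it makes the $PSRR_k$ diagram commute, concluding by the uniqueness clause in the definition of $PSRR_k$. Your step-case computation $\tilde h\circ\langle\mathrm{id},f\rangle=h\circ\langle\pi_X,f\rangle$ and the identity $\tau_{N_{k+1}}\circ 0_{k+1}=\mathrm{id}_\top$ (hence $\pi_X\circ(0_{k+1}\otimes X)=l$) are fine.

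The base case as written, however, is wrong. The $SDR_k$ diagram does not give $f\circ(0_{k+1}\otimes X)=\tilde h\circ\langle 0_{k+1}\otimes X,\,g\circ l\rangle$: in that diagram $f$ leaves the middle object only inside the pairing $\langle\mathrm{id},f\rangle$, and the left cell asserts $\langle\mathrm{id},f\rangle\circ(0_{k+1}\otimes X)=\langle 0_{k+1}\otimes X,\,g\circ\pi_1\rangle$, nothing more. Your chain would force $f\circ(0_{k+1}\otimes X)=h\circ\langle\mathrm{id},g\rangle\circ l$, i.e.\ semantically $f(0,x)=h(x,g(x))$, which is false in general: for multiplication defined by $PSRR$ from $g=0_0\circ\tau_{N_1}$ and $h=\bigoplus$ it would yield $\bigotimes(0,y)=\bigoplus(y,0)=y$ instead of $0$. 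Moreover $h\circ\langle\mathrm{id},g\rangle\circ l$ is not even the condition you must verify: the left cell of the $PSRR_k$ diagram is a square landing in $X\otimes Y$, namely $\langle\pi_1,f\rangle\circ(0_{k+1}\otimes X)=\langle\mathrm{id},g\rangle\circ l$. The repair is short: taking second components in the $SDR_k$ left cell gives $f\circ(0_{k+1}\otimes X)=g\circ l$, and combining this with $\pi_X\circ(0_{k+1}\otimes X)=l$ via the universal property of the pairing yields $\langle\pi_X,f\rangle\circ(0_{k+1}\otimes X)=\langle l,\,g\circ l\rangle=\langle\mathrm{id},g\rangle\circ l$, which is exactly the required square. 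With that correction (and the routine remark that $X$ and $Y$ must be cartesian objects so that $SDR_k$ is available, as is the case for the objects of $\mathcal{FR}^{n}$ in play), your argument goes through.
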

With this result we can argue that a doctrine which is closed under
the $SDR$ scheme is also closed under the $PSRR$ scheme.

We now define some functions:%
\footnote{They are functions belonging to the so-called \emph{Hyperoperation
Sequence}, which gives an easy way to classify the functions into
the \emph{Grzegorzcyk} \emph{Hierarchy }by its complexity. %
}
\begin{itemize}
\item \emph{addition }in $\mathcal{\mathcal{FR}}^{1}$ denoted by $\underset{10;0}{\bigoplus}:N_{1}\otimes N_{0}\longrightarrow N_{0}$
is defined by $SRR$: {\small{}
\[
\xymatrix{\top\otimes N_{0}\ar[rr]^{0_{1}\otimes N_{0}}\ar[d]_{l} &  & N_{1}\otimes N_{0}\ar[rr]^{s_{1}\otimes N_{0}}\ar[d]^{\bigoplus} &  & N_{1}\otimes N_{0}\ar[d]^{\bigoplus}\\
N_{0}\ar[rr]_{id} &  & N_{0}\ar[rr]_{s_{0}} &  & N_{0}
}
\]
}such that {\small{}
\[
\begin{cases}
\bigoplus(0,n)=n\\
\bigoplus(sx,n)=s(\bigoplus(x,n))
\end{cases}
\]
}{\small \par}
\item \emph{multiplication} in $\mathcal{\mathcal{F}R}^{2}$ denoted by
$\underset{11;0}{\bigotimes}:N_{1}\otimes N_{1}\longrightarrow N_{0}$
is defined by $PSRR$: {\small{}
\[
\xymatrix{\top\otimes N_{1}\ar[rr]^{0_{1}\otimes N_{1}}\ar[d]_{l} &  & N_{1}\otimes N_{1}\ar[rr]^{s_{1}\otimes N_{1}}\ar[d]^{\pi_{1},\bigotimes} &  & N_{1}\otimes N_{1}\ar[d]^{\bigotimes}\\
N_{1}\ar[rr]_{id,0\circ\tau_{N_{1}}} &  & N_{1}\otimes N_{0}\ar[rr]_{\bigoplus} &  & N_{0}
}
\]
such that} {\small{}
\[
\begin{cases}
\bigotimes(0,y)=0\\
\bigotimes(sx,y)=\bigoplus(y,\bigotimes(x,y))
\end{cases}
\]
} 
\item \emph{exponentiation} in $\mathcal{\mathcal{FR}}^{3}$ denoted by
$\underset{21;1}{\uparrow}:N_{2}\otimes N_{1}\longrightarrow N_{1}$
is defined by $PSRR$:%
\footnote{$c_{1}$ is the constant function $1$.%
} {\small{}
\[
\xymatrix{\top\otimes N_{1}\ar[rr]^{0_{2}\otimes N_{1}}\ar[d]_{l} &  & N_{2}\otimes N_{1}\ar[rr]^{s_{2}\otimes N_{1}}\ar[d]^{\pi_{1},\uparrow} &  & N_{2}\otimes N_{1}\ar[d]^{\uparrow}\\
N_{1}\ar[rr]_{id,c_{1}} &  & N_{1}\otimes N_{1}\ar[rr]_{G_{0}\bigotimes} &  & N_{1}
}
\]
}such that {\small{}
\[
\begin{cases}
\uparrow(0,y)=c_{1}\\
\uparrow(sx,y)=G_{0}\bigotimes(y,\uparrow(x,y))
\end{cases}
\]
} 
\item \noindent \emph{tetration }in $\mathcal{\mathcal{FR}}^{4}$ denoted
by $\underset{32;1}{\upuparrows}:N_{3}\otimes N_{2}\longrightarrow N_{1}$
is defined by $PSRR$: {\small{}
\[
\xymatrix{\top\otimes N_{2}\ar[rr]^{0_{3}\otimes N_{2}}\ar[d]_{\pi_{1}} &  & N_{3}\otimes N_{2}\ar[rr]^{s_{3}\otimes N_{2}}\ar[d]^{\pi_{1},\upuparrows} &  & N_{3}\otimes N_{2}\ar[d]^{\upuparrows}\\
N_{2}\ar[rr]_{id,\eta_{2}N_{2}} &  & N_{2}\otimes N_{1}\ar[rr]_{\uparrow} &  & N_{1}
}
\]
}such that {\small{}
\[
\begin{cases}
\upuparrows(0,y)=y\\
\upuparrows(sx,y)=\uparrow(y,\upuparrows(x,y))
\end{cases}
\]
}{\small \par}
\end{itemize}

\section{Safe composition}

Safe composition, as defined in the following Definition, has a representation
in $\mathcal{FR}^{n}$ by means of diagrams associated to natural
transformations in the form $T_{0}...T_{k-1}\eta_{k}$.
\begin{defn}
\label{bn}We say that a function $f$ is defined by \emph{safe composition}
from functions $\overline{r_{0}},...,\overline{r_{n}}$ and $h$ if
\[
f(\overline{x_{n}};...;\overline{x_{0}})=h(\overline{r_{n}}(\overline{x_{n}};);\overline{r_{n-1}}(\overline{x_{n}};\overline{x_{n-1}});...;\overline{r_{0}}(\overline{x_{n}};...;\overline{x_{0}}))
\]
where the level of $f$ is the level of $h$ while the level of $r_{n}$
is less or equal than $n$ and that of $r_{0}$ is $0$.
\end{defn}
For every $\eta_{k}$ and $f:\overset{n-1}{\underset{j=0}{\bigotimes}}N_{j}^{\alpha_{j}}\longrightarrow N_{m}^{\beta}$
morphism in\emph{ }$\mathcal{FR}^{n}$ we have commutative diagrams
in the following form:

\hspace{2em}\xymatrix{T_{0}...T_{k-1}(\overset{n-1}{\underset{j=k}{\bigotimes}}N_{j}^{\alpha_{j}})\ar[rr]^{T_{0}...T_{k-1}f}\ar[dd]_{T_{0}...T_{k-1}\eta_{k}(\overset{n-1}{\underset{j=k}{\bigotimes}}N_{j}^{\alpha_{j}})} &  & T_{0}...T_{k-1}N_{m}^{\beta}\ar[dd]^{T_{0}...T_{k-1}\eta_{k}N_{m}^{\beta}}\\\\T_{0}...T_{k}(\overset{n-1}{\underset{j=k}{\bigotimes}}N_{j}^{\alpha_{j}})\ar[rr]_{T_{0}...T_{k}f} &  & T_{0}...T_{k}N_{m}^{\beta}}

\noindent obtained by the action of $T_{0}...T_{k-1}\eta_{k}$ with
$k=0,1,...,n-2$ over $f$. 

In this diagram we have made use of the identities%
\footnote{Working up to isomorphisms $l$ and $r$.%
} {\small{}
\[
T_{0}...T_{k}(\overset{n-1}{\underset{j=0}{\bigotimes}}N_{j}^{\alpha_{j}})=\overset{n-1}{\underset{j=k+1}{\bigotimes}}N_{j}^{\alpha_{j}}
\]
}and the fact that the arrow {\small{}
\[
\eta_{k}(\overset{n-1}{\underset{j=k}{\bigotimes}}N_{j}^{\alpha_{j}}):\overset{n-1}{\underset{j=k}{\bigotimes}}N_{j}^{\alpha_{j}}\longrightarrow T_{k}(\overset{n-1}{\underset{j=k}{\bigotimes}}N_{j}^{\alpha_{j}})
\]
}is actually an arrow{\small{}
\[
\overset{n-1}{\underset{j=k}{\bigotimes}}N_{j}^{\alpha_{j}}\longrightarrow\overset{n-1}{\underset{j=k+1}{\bigotimes}}N_{j}^{\alpha_{j}}
\]
}for every $k=0,1,...,n-3$ with which we have an expression of $f$
in terms of coercions $T_{k}$ due to the fact that they don't change
anything over an object in the form $N_{m}^{\beta}$ for $k\leq m-1$. 

This grabs the formulation of safe composition from Definition \ref{bn}
because we obtain an expression of each morphism in $\mathcal{FR}^{n}$
in terms of other morphisms whose variables belong, as maximum, to
the same species of the former. Therefore, the level $n-1$ output
does not depend on lower species inputs when we are in $\mathcal{FR}^{n}$.
In general, a $s$ species output does not depend on lower species
inputs than $s$.
\begin{thm}
\label{nnn}For every function 
\[
h(\overline{x}_{n};...;\overline{x_{k+1}};z,\overline{x_{k}};...;\overline{x_{0}})
\]
where $0\leq k<n$ there exists a function 
\[
f(\overline{x}_{n};...;\overline{x_{k+1}},z;\overline{x_{k}};...;\overline{x_{0}})
\]
obtained by safe composition from $h$ and projections such that 
\[
h(\overline{x}_{n};...;\overline{x_{k+1}};z,\overline{x_{k}};...;\overline{x_{0}})=f(\overline{x}_{n};...;\overline{x_{k+1}},z;\overline{x_{k}};...;\overline{x_{0}})
\]
\end{thm}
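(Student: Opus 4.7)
The plan is to realise $f$ as an explicit safe composition whose only non-projection ingredient is the coercion supplied by the comprehension structure. The key observation is that the natural transformation $\epsilon_{k}:G_{k}\Longrightarrow id$, once instantiated at $N_{k}$, gives a morphism $\epsilon_{k}N_{k}:G_{k}N_{k}=N_{k+1}\longrightarrow N_{k}$ in $\mathcal{FR}^{n}$. This is exactly the piece of data that lets a species-$(k+1)$ input be handed to a slot that expects a species-$k$ input; the rest of the construction is a rearrangement of cartesian projections, which are available by Proposition \ref{cart} together with the cocommutative comonoid structure inherited from the previous section.

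First, I would write out $f$ as a safe composition in the sense of Definition \ref{bn}. For each $i\in\{0,1,\dots,n\}$ I define an auxiliary $r_{i}$ taking the variables of $f$ at species $\geq i$. For $i\notin\{k,k+1\}$ let $r_{i}$ be the obvious cartesian projection onto $\overline{x_{i}}$. For $i=k+1$, let $r_{k+1}(\overline{x_{n}};\ldots;\overline{x_{k+1}},z)$ project onto $\overline{x_{k+1}}$, erasing the extra coordinate $z$ via the eraser morphism $\tau_{N_{k+1}}$ of Proposition \ref{cart}. For $i=k$, let $r_{k}(\overline{x_{n}};\ldots;\overline{x_{k+1}},z;\overline{x_{k}})$ return the pair $(\epsilon_{k}N_{k}(z),\overline{x_{k}})$, using the coercion $\epsilon_{k}N_{k}:N_{k+1}\to N_{k}$ on the $z$ coordinate and the cartesian projection on the remaining coordinates.

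Second, I would check the species bookkeeping: each $r_{i}$ depends only on the variables of $f$ at species $\geq i$, and its codomain lies at species $\leq i$, so the formation rule of safe composition is respected. The only non-trivial step is the $r_{k}$ component, and it is justified precisely because $\epsilon_{k}N_{k}$ is a morphism in $\mathcal{FR}^{n}$ with codomain $N_{k}$, i.e. it is already a species-$k$ datum once the coercion has been applied. Setting $f:=h(r_{n};r_{n-1};\ldots;r_{0})$ then makes $f$ a safe composition of $h$ with projections (and the comprehension coercion).

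Third, equality follows by unwinding: the values of $r_{i}$ on the $f$-inputs are, tuple-by-tuple, the corresponding $h$-inputs, because projections preserve values and because $\epsilon_{k}N_{k}$ is the identity on the underlying element (it is a species relabelling, not a computation); this can be made rigorous via the standard model functor $\Gamma_{n}$ of Definition \ref{gamma}, which identifies $\Gamma N_{k+1}$ with $\Gamma N_{k}=\mathbf{N}$ compatibly with $\epsilon_{k}N_{k}$. The main obstacle I anticipate is precisely this last point: one must justify that the coercion $\epsilon_{k}N_{k}$ may be counted among the \emph{projections} appearing in the statement, or equivalently that it is semantically trivial, which ultimately rests on the commuting square with $T_{0}\ldots T_{k-1}\eta_{k}$ displayed in the preceding section and on the naturality of $\epsilon_{k}$ with respect to the cartesian structure on $\mathcal{FR}^{n}$.
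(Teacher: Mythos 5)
Your decomposition is the same one the paper uses: fill every argument slot of $h$ with projections, the only interesting slot being the species-$k$ one, which must receive $z$ together with $\overline{x_{k}}$; indeed the paper's entire proof is ``take projection functions as $\overline{r}$''. Where you diverge is in inserting the coercion $\epsilon_{k}N_{k}:N_{k+1}\longrightarrow N_{k}$ on the $z$-coordinate and then arguing through the standard model $\Gamma_{n}$ that this coercion is harmless. That extra step is exactly where your argument falls short of the statement as phrased: Theorem \ref{nnn} asserts that $f$ is obtained by safe composition from $h$ and \emph{projections}, and $\epsilon_{k}N_{k}$ (the drop morphism $d_{k}$) is not a projection; appealing to $\Gamma_{n}$ only yields the numerical equality of $h$ and $f$ in the model, not the syntactic definability claim, and you flag this obstacle yourself without closing it.

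The intended justification needs no coercion at all. In the variable-species language of this section the species are cumulative: a variable of the $n$-th species also belongs to the $(n+1)$-th species. Hence $z$, being the species-$k$ variable of $h$, may simply be listed among the species-$(k+1)$ arguments of $f$ without changing its nature; the component $r_{k}$ of the safe composition is then the bare projection returning $(z,\overline{x_{k}})$, which has level $k$ and depends only on arguments of species $\geq k$ of $f$, so Definition \ref{bn} applies verbatim and the asserted equality holds by construction --- this is the Bellantoni--Cook observation that a safe argument may always be promoted to a normal position. Your bookkeeping for the remaining slots (projection onto $\overline{x_{k+1}}$ with $z$ erased via $\tau_{N_{k+1}}$, identities elsewhere) is fine, and your coercion does go in the admissible lowering direction, so the construction can be repaired either by dropping $\epsilon_{k}N_{k}$ as above or by proving separately that composing with the drop morphisms is already covered by the projections admitted in the statement; as written, that justification is the missing piece.
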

\begin{proof}
Take projection functions as $\overline{r}$.
\end{proof}
That is, every variable being in a species $k$ position can be moved
to a species $t>k$ position.

The function classes characterized by this setting will satisfy one
of the main features of the subrecursive hierarchies,\emph{ }that
is, its growing behaviour: there exist functions not belonging to
any previous class in their ordering. Take for exemple those of the
\emph{Hyperoperation Sequence} and its relation with the classes in
the \emph{Grzegorzcyk Hierarchy} denoted by $\mathcal{E}^{n}$ for
$n\in\mathbf{N}$. Every \emph{(n+1)}-level function in the \emph{Hyperoperation
Sequence} belong to $\mathcal{E}^{n+1}$ but not to $\mathcal{E}^{n}$. 

Concurrently, we can give in $\mathcal{E}^{k}$ a copy of each function
in $\mathcal{E}^{j}$ for every $k\geq j$ and we forbid in $\mathcal{E}^{j}$
any copy of a function generated in $\mathcal{E}^{k}$. The former
is done by the action of a coercion functor $G_{m}$ for $k>m\geq j$
and the latter by avoiding the application of endofunctors $T_{m}$
for $k\geq m>j$ over the arrows generated by means of a recursion
scheme in $\mathcal{E}^{k}$. This is done to avoid the structure
collapse due to the fact that those coercion functors may reduce subindexes.
In these situations we must consider a subcategory $\mathcal{SFR}^{n}$
of $\mathcal{FR}^{n}$ which we describe in Appendix 4.

\section{Conclusions and future work}

Symmetric Monoidal \emph{n-Comprehensions} are proved to be useful
for new characterizations of subrecursive function classes, giving
a wider point of view of recursion in Category Theory. 

This work can be extended, for instance, by considering other (partial)
orders as giving rise to a different concept of \emph{n}-\emph{Comprehension}
to chase different function classes (see \citep{Otto} for this particular).
Other investigation line to follow starting from this paper could
be a fibrational point of view of the results here given.

\noindent

\section*{Appendix 1}

We will use some concepts of \citep{Kelly} to get the cotensor of
two $\mathcal{V}$-categories. For that \emph{$\mathcal{V}$ }will
be in the sequel a monoidal category.
\begin{defn}
Let $\mathcal{B}$ be a $\mathcal{V}-$category, $B,C\in\mathcal{B}$
and $X\in\mathcal{V}$. If there exists an object $D$ in $\mathcal{B}$
and a $\mathcal{V}$-natural isomorphism 
\[
\mathcal{B}(B,D)\cong[X,\mathcal{B}(B,C)]
\]
we say that $D$ is the \emph{cotensor product of $X$ and $C$ in
$\mathcal{B}$ }and we will denote it by\emph{ $X\multimap C$.} 

If it exists for all $X$ and $C$ then we say that the $\mathcal{V}-$category
$\mathcal{B}$ is \emph{cotensorial}. In this case we also have the
isomorphism $\mathcal{B}(X\otimes B,C)\cong[X,\mathcal{B}(B,C)]$
which means that we have the isomorphism 
\[
\mathcal{B}(X\otimes B,C)\cong\mathcal{B}(B,X\multimap C)
\]
whenever $\mathcal{V}$ is \emph{SM }closed and its \emph{underlying}
$\mathcal{V}_{0}$ is complete. \end{defn}
\begin{rem}
\label{SMC}Every \emph{SM} closed category has tensor and cotensor
products for every pair of objects and the cotensor product is the
hom-object formed by those objects.\end{rem}
\begin{example}
Let $\mathcal{C}$ be a\emph{ }SM category. We take $\mathcal{B}=\mathcal{SM},\mathcal{V}=Cat$
and the 2-functors 
\[
G:\mathcal{I}\longrightarrow\mathcal{SM}\textrm{ and }F:\mathcal{I}\longrightarrow Cat
\]
where $\mathcal{I}$ is the unit $\mathcal{V}$-category such that
$G$ determines a category $\mathcal{D}$ and $F$ determines $\mathbf{n}$.
Then for all $\mathcal{C}\in\mathcal{SM}$ we have

{\small{}
\[
\mathcal{SM}(\mathbf{n}\otimes\mathcal{D},\mathcal{C})\cong\mathcal{SM}(\mathcal{D},\mathbf{n}\multimap\mathcal{C})\cong[\mathbf{n},\mathcal{SM}(\mathcal{D},\mathcal{C})]
\]
}and, by taking $\mathcal{D}=\mathcal{C}$,{\small{}
\[
\mathcal{SM}(\mathbf{n}\otimes\mathcal{C},\mathcal{C})\cong\mathcal{SM}(\mathcal{C},\mathbf{n}\multimap\mathcal{C})\cong[\mathbf{n},\mathcal{SM}(\mathcal{C},\mathcal{C})]
\]
}where the 2-category at right is isomorphic to $\mathcal{SM}(\mathcal{C},\mathcal{C})^{n}$. 

This construction makes sense due to the fact that $\mathcal{SM}$
admits cotensor objects with the category $\mathbf{n}$ whenever $\mathcal{C}\in\mathcal{SM}$,
a fact that we spell out immediately below. $\mathcal{SM}$ can be
seen itself as a $\mathcal{V}$-category with $\mathcal{V}=Cat$ a
\emph{SM }closed category, we can then say that the cotensor object
is exactly the hom-object. That is
\[
\mathbf{n}\multimap\mathcal{C}=[\mathbf{n},\mathcal{C}]
\]

It can be defined a symmetric monoidal structure for $\mathbf{n}\multimap\mathcal{C}$
when $\mathcal{C}$ is in $\mathcal{SM}$ given by the following:
\begin{itemize}
\item as unit we take the following chain of $n-1$ morphisms
\[
\top\longrightarrow\top\longrightarrow\ldots\longrightarrow\top
\]

\item tensor product of the objects

\hspace{7em}\xymatrix{Y_{0}\ar[r]^{y_{0}} &  & \cdots & \ar[r]^(.4){y_{n-2}} & Y_{n-1}}

and

\hspace{7em}\xymatrix{X_{0}\ar[r]^{x_{0}} &  & \cdots & \ar[r]^(.3){x_{n-2}} & X_{n-1}}

is defined by

\hspace{2em}\xymatrix{Y_{0}\otimes X_{0}\ar[rr]^(.6){y_{0}\otimes x_{0}} &  &  & \cdots & \ar[rr]^(.3){y_{n-2}\otimes x_{n-2}} &  & Y_{n-1}\otimes X_{n-1}}

\item tensor product of an object

\hspace{7em}\xymatrix{Y_{0}\ar[r]^{y_{0}} &  & \cdots & \ar[r]^(.4){y_{n-2}} & Y_{n-1}}

and an arrow%
\footnote{We will express squares like this simply as $(f_{0},...,f_{n-1})$.%
}

\hspace{6em}\xymatrix{X_{0}\ar[r]^{x_{0}}\ar[d]_{f_{0}} &  & \cdots & \ar[r]^(.3){x_{n-2}} & X_{n-1}\ar[d]^{f_{n-1}}\\X'_{0}\ar[r]^{x'_{0}} &  & \cdots & \ar[r]^(.3){x'_{n-2}} & X'_{n-1}}

is defined by

\hspace{1em}\xymatrix{Y_{0}\otimes X_{0}\ar[rr]^(.6){y_{0}\otimes x_{0}}\ar[d]_{Y_{0}\otimes f_{0}} &  &  & \cdots & \ar[rr]^(.3){y_{n-2}\otimes x_{n-2}} &  & Y_{n-1}\otimes X_{n-1}\ar[d]^{Y_{n-1}\otimes f_{n-1}}\\Y_{0}\otimes X'_{0}\ar[rr]^(.6){y_{0}\otimes x'_{0}} &  &  & \cdots & \ar[rr]^(.3){y_{n-2}\otimes x'_{n-2}} &  & Y_{n-1}\otimes X'_{n-1}}

\item symmetries are

\hspace{1em}\xymatrix{Y_{0}\otimes X_{0}\ar[rr]^(.6){y_{0}\otimes x_{0}}\ar[d]_{s_{X_{0},Y_{0}}} &  &  & \cdots & \ar[rr]^(.3){y_{n-2}\otimes x_{n-2}} &  & Y_{n-1}\otimes X_{n-1}\ar[d]^{s_{X_{n-1},Y_{n-1}}}\\X_{0}\otimes Y_{0}\ar[rr]^(.6){x_{0}\otimes y_{0}} &  &  & \cdots & \ar[rr]^(.3){x_{n-2}\otimes y_{n-2}} &  & X_{n-1}\otimes Y_{n-1}}

\end{itemize}

\noindent and the rest of diagrams giving the \emph{SM} structure
where commutativity is satisfied in every diagram because of the \emph{SM}
structure in $\mathcal{C}$.

\end{example}

\section*{Appendix 2}
\begin{defn}
Let $\mathcal{C}$ be a \emph{SM} category. We denote by $CC(\mathcal{C})$
the category whose 
\begin{itemize}
\item objects are cocommutative comonoids in $\mathcal{C}$ in the form
$(A,\delta_{A},\tau_{A})$
\item morphisms between cocommutative comonoids $(A,\delta_{A},\tau_{A})$
and $(B,\delta_{B},\tau_{B})$ are morphisms $f:A\longrightarrow B$
in \emph{$\mathcal{C}$} such that the following diagrams commute{\small{}
\[
\xymatrix{A\ar[r]^{f}\ar[d]_{\delta_{A}} & B\ar[d]^{\delta_{B}}\\
A\otimes A\ar[r]_{f\otimes f} & B\otimes B
}
\quad\xymatrix{A\ar[rd]_{\tau_{A}}\ar[rr]^{f} &  & B\ar[dl]^{\tau_{B}}\\
 & \top
}
\]
}{\small \par}
\end{itemize}
\end{defn}
\begin{rem}
$CC(\mathcal{C})$ is cartesian (see \citep{Fox}). That cartesian
product in $CC(\mathcal{C})$ is given by the comonoid $(A\otimes B,\delta_{A\otimes B},\tau_{A\otimes B})$
for $(A,\delta_{A},\tau_{A})$ and $(B,\delta_{B},\tau_{B})$ in $CC(\mathcal{C})$
due to the fact that the following diagram commutes for all $f:C\longrightarrow A$
and $g:C\longrightarrow B$ in $CC(\mathcal{C})${\small{}
\[
\xymatrix{ &  & C\ar[d]^{(f\otimes g)\circ\delta_{C}}\ar[lld]_{f}\ar[rrd]^{g}\\
A &  & A\otimes B\ar[ll]^{r\circ(A\otimes\tau_{B})}\ar[rr]_{l\circ(\tau_{B}\otimes B)} &  & B
}
\]
}where the definition of \emph{projections in a cartesian }SM\emph{
category} (see the following) are given implicitly.\end{rem}
\begin{defn}
A \emph{cartesian symmetric monoidal category} (a\emph{ cartesian
SM category} in the sequel) is a symmetric monoidal category whose
monoidal structure is given by a cartesian product.\end{defn}
\begin{rem}
From this Definition we can argue that the unit of the tensor in the
case of a cartesian \emph{SM} category is a terminal object in the
category.
\end{rem}
Every cartesian \emph{SM} category is endowed with morphisms \emph{diagonal}
in the form $\delta_{C}:C\rightarrow C\otimes C$ and \emph{eraser}
in the form $\tau_{C}:C\rightarrow\top$ for every object $C$. We
can think on the interpretation of morphisms $\delta$ and $\tau$
in terms of Computer Science as \emph{the one that duplicates a datum}
and \emph{the one that deletes a datum} respectively. Those morphisms
carry the structure of a \emph{cocommutative comonoid} over an object
in the category. In fact, every object in a cartesian \emph{SM} category
can be seen uniquely as a comonoid as seen in the following.
\begin{thm}
Given a cartesian symmetric monoidal category $\mathcal{C}$ every
object is endowed with a cocommutative comonoid structure uniquely
defined.\end{thm}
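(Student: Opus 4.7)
The plan is to exploit the fact that in a cartesian SM category $\mathcal{C}$ the tensor $\otimes$ is literally a categorical product (with projections $\pi_1,\pi_2$) and $\top$ is a terminal object. For existence, I would put
\[
\delta_C := \langle 1_C, 1_C\rangle \colon C \longrightarrow C\otimes C,
\qquad
\tau_C \colon C \longrightarrow \top,
\]
the former being the unique pairing given by the universal property of the product applied to the pair $(1_C,1_C)$, and the latter the unique arrow into the terminal object.

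Next I would verify the three comonoid axioms by invoking universal properties rather than by computation. For the counit laws, it suffices to postcompose with the projections: $\pi_i \circ (C\otimes \tau_C)\circ \delta_C = 1_C$ for $i=1,2$, and the pair then equals $1_C$ by uniqueness of the pairing; the same argument handles $(\tau_C\otimes C)\circ \delta_C$. Coassociativity $(\delta_C \otimes C)\circ \delta_C = (C\otimes \delta_C)\circ \delta_C$ is obtained by postcomposing with the three projections $C\otimes C\otimes C \to C$ and checking each component yields $1_C$; cocommutativity $\sigma_{C,C}\circ \delta_C = \delta_C$ is analogous, using that $\sigma$ swaps the two projections.

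For uniqueness, suppose $(C, \delta', \tau')$ is any cocommutative comonoid structure. Since $\top$ is terminal, $\tau' = \tau_C$ automatically. The counit laws applied to $\delta'$ give $\pi_1 \circ \delta' = 1_C$ and $\pi_2 \circ \delta'= 1_C$ (after identifying $r,l$ with the projections under the cartesian identification of $\otimes$ with $\times$), so by the universal property of the product $\delta' = \langle 1_C, 1_C\rangle = \delta_C$. Hence the structure is unique.

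I do not anticipate a genuine obstacle here; the only delicate point is bookkeeping between the SM data $(\otimes,\top,l,r,\sigma)$ and the universal-product presentation $(\times,1,\pi_1,\pi_2)$, i.e.\ making sure that $r\circ (C\otimes \tau_C)$ and $l\circ(\tau_C\otimes C)$ are identified with the two projections so that the counit laws really read $\pi_i\circ\delta = 1_C$. Once this identification is fixed, both existence and uniqueness reduce to the universal property of the product and of the terminal object, with no further work required.
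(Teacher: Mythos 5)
Your proposal is correct and follows essentially the same route as the paper: $\tau_C$ is forced by terminality of $\top$ and $\delta_C$ is forced to be the pairing $\left\langle 1_C,1_C\right\rangle$ by the universal property of the product (with the projections identified with $r\circ(C\otimes\tau_C)$ and $l\circ(\tau_C\otimes C)$). You spell out the verification of the comonoid axioms and the uniqueness step in more detail than the paper, which only sketches them, but the underlying argument is the same.
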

\begin{proof}
By being a cartesian symmetric monoidal category we know that the
unit $\top$ is a terminal object and therefore there exists for every
object $C$ in $\mathcal{C}$ a unique arrow $C\longrightarrow\top$
that has to be $\tau_{C}$ for the comonoid structure.

On the other hand, by being cartesian we can construct a commutative
diagram in the form
\[
\xymatrix{ & C\ar[d]^{h}\ar[rd]^{id}\ar[ld]_{id}\\
C & C\otimes C\ar[r]_{\pi_{2}}\ar[l]^{\pi_{1}} & C
}
\]
where the unique $h$, denoted by $\left\langle id,id\right\rangle $,
has to be the duplication arrow $\delta_{C}$.
\end{proof}

\section*{Appendix 3}
\begin{proof}
{[}of Theorem \ref{transmono}{]} For an object $D$ in $\mathcal{C}$
and arrows $f_{1}:D\longrightarrow C_{1}$ and $f_{2}:D\longrightarrow C_{2}$
we can construct a diagram%
\footnote{And an analogous for $C_{2}$.%
} {\small{}
\[
\qquad\xymatrix{D\ar[d]_{f_{1}}\ar[r]^{\delta_{D}}\ar@{}[dr]|{\mathstrut\raisebox{2ex}{\ensuremath{(1)}\kern0.5em }} & D\otimes D\ar[r]^{f_{1}\otimes f_{2}}\ar[d]^{f_{1}\otimes f_{1}}\ar@{}[dr]|{\mathstrut\raisebox{2ex}{\textrm{ }\textrm{ }\ensuremath{(2)}\kern0.3em }} & C_{1}\otimes C_{2}\ar[d]^{C_{1}\otimes\tau_{C_{2}}}\\
C_{1}\ar[r]_{\delta_{C_{1}}} & C_{1}\otimes C_{1}\ar[r]_{C_{1}\otimes\tau_{C_{1}}}\ar@{}[d]|{\mathstrut\raisebox{2ex}{\textrm{ }\textrm{ }\ensuremath{(3)}\kern0.3em }} & C_{1}\otimes\top\ar@/^{2pc}/[ll]^{r}\\
 & \textrm{ }
}
\]
}{\small \par}

\noindent where $(1)$ commutes for being $\delta$ a natural transformation
and $(3)$ by hypothesis while the commutativity of diagram numbered
$(2)$ can be proved by considering it as {\small{}
\[
\xymatrix{D\otimes D\ar[r]^{D\otimes f_{2}}\ar[d]_{f_{1}\otimes D}\ar@{}[dr]|{\mathstrut\raisebox{2ex}{\textrm{ }\textrm{ }\ensuremath{(4)}\kern0.3em }} & D\otimes C_{2}\ar[d]^{f_{1}\otimes C_{2}}\\
C_{1}\otimes D\ar[r]^{C_{1}\otimes f_{2}}\ar[d]_{C_{1}\otimes f_{1}}\ar@{}[dr]|{\mathstrut\raisebox{2ex}{\textrm{ }\textrm{ }\ensuremath{(5)}\kern0.3em }} & C_{1}\otimes C_{2}\ar[d]^{C_{1}\otimes\tau_{C_{2}}}\\
C_{1}\otimes C_{1}\ar[r]_{C_{1}\otimes\tau_{C_{1}}} & C_{1}\otimes\top
}
\]
}where $(4)$ commutes for bifunctoriality. Diagram $(5)$ commutes
by taking a monoidal natural transformation $C_{1}\otimes\tau$ giving,
for $f_{1}$ and $f_{2}$ like above:{\small{}
\[
\xymatrix{C_{1}\otimes id(D)\ar[r]^{C_{1}\otimes\tau_{D}}\ar[d]_{C_{1}\otimes id(f_{1})} & C_{1}\otimes t(D)\ar[d]^{C_{1}\otimes t(f_{1})}\\
C_{1}\otimes id(C_{1})\ar[r]_{C_{1}\otimes\tau_{C_{1}}} & C_{1}\otimes t(C_{1})
}
\qquad\xymatrix{C_{1}\otimes id(D)\ar[r]^{C_{1}\otimes\tau_{D}}\ar[d]_{C_{1}\otimes id(f_{2})} & C_{1}\otimes t(D)\ar[d]^{C_{1}\otimes t(f_{2})}\\
C_{1}\otimes id(C_{2})\ar[r]_{C_{1}\otimes\tau_{C_{2}}} & C_{1}\otimes t(C_{2})
}
\]
}giving {\small{}
\[
\xymatrix{C_{1}\otimes D\ar[r]^{C_{1}\otimes\tau_{D}}\ar[d]_{C_{1}\otimes f_{1}} & C_{1}\otimes\top\ar[d]^{C_{1}\otimes\top}\\
C_{1}\otimes C_{1}\ar[r]_{C_{1}\otimes\tau_{C_{1}}} & C_{1}\otimes\top
}
\qquad\xymatrix{C_{1}\otimes D\ar[r]^{C_{1}\otimes\tau_{D}}\ar[d]_{C_{1}\otimes f_{2}} & C_{1}\otimes\top\ar[d]^{C_{1}\otimes\top}\\
C_{1}\otimes C_{2}\ar[r]_{C_{1}\otimes\tau_{C_{2}}} & C_{1}\otimes\top
}
\]
}both commuting for naturality. 

Then the $(1),(2),(3)$-diagram (together with its analogous for $C_{2}$)
is a cartesian product diagram where projections are $r\circ(C_{1}\otimes\tau_{C_{2}})$
and $l\circ(\tau_{C_{1}}\otimes C_{2})$ and the uniqueness is obvious
given $f_{1}$ and $f_{2}$.
\end{proof}

\section*{Appendix 4}

Let $\mathcal{SFR}^{n}$ be a subcategory of $\mathcal{FR}^{n}$ in
which we avoid any application of $T$ over the objects and morphisms
of $\mathcal{FR}^{n}$. We define some of the objects in $\mathcal{SFR}^{n}$
by means of $G$ as happens in the case of $\mathcal{FR}^{n}$ but
will make use of endofunctors $T$ only for the introduction of a
bounding condition in the recursion schemes used in $\mathcal{SFR}^{n}$. 

We introduce in the squares below the description of $\mathcal{SFR}^{n}$
in the form of a language for its objects and morphisms of $\mathcal{SFR}^{n}$.
The rules into the squares are subject to the following conventions: 
\begin{itemize}
\item we have omitted defining symmetric monoidal category rules (identity,
associativity as well as coherence diagrams)
\item $X,Y,Z$ and $W$ denote whatever object 
\item $f$ and $g$ denote whatever morphism
\item $a,l,\sigma$ denote the natural isomorphisms of the \emph{SM} structure
\item subindex $k$ will range between $0$ and $n-2$ into the squares
when no other indication is given.%
\footnote{Due to the syntactical behaviour of the definitions above, we should
mention that $d_{k}$ applied to $0_{k+1}$ gives $0_{k}$ and, analogously,
for every other natural number in a level $k+1$ it assigns the same
number in level $k$.%
}
\end{itemize}
{\scriptsize{}}%
\doublebox{\begin{minipage}[t]{1\columnwidth}%
\begin{enumerate}
\item \textbf{\footnotesize{}\uline{Objects}}{\footnotesize \par}

$ $
\begin{enumerate}
\item \textbf{\footnotesize{}\uline{$\mathbf{initial\; objects}$}}{\footnotesize \par}

\begin{center}
{\footnotesize{}$\infer[\mathbf{\top-object}]{\top}{\qquad}\qquad$$\infer[N_{k}\mathbf{-object}]{N_{k}}{\qquad}$}
\par\end{center}{\footnotesize \par}

\item \textbf{\footnotesize{}\uline{$\mathbf{generation\; of\; objects}$}}{\footnotesize \par}

\begin{center}
{\footnotesize{}$\infer[\mathbf{\otimes-object}]{X\otimes Y}{X\quad Y}$}
\par\end{center}{\footnotesize \par}

\end{enumerate}
\item \textbf{\footnotesize{}\uline{Arrows}}{\footnotesize \par}

$ $
\begin{enumerate}
\item \textbf{\footnotesize{}\uline{$\mathbf{initial\: arrows}$}}{\footnotesize \par}

{\footnotesize{}
\[
\infer[\mathbf{identity}]{id_{X}:X\rightarrow X}{X}\qquad\infer[\mathbf{zero}]{0_{k}:\top\rightarrow N_{k}}{}\qquad\infer[\mathbf{successor}]{s_{k}:N_{k}\rightarrow N_{k}}{}
\]
\[
\infer[\mathbf{eraser}]{\tau_{X}:X\rightarrow\top}{X}\qquad\infer[\mathbf{duplication}]{\delta_{X}:X\rightarrow X\otimes X}{X}
\]
}{\footnotesize \par}

{\footnotesize{}$ $
\[
\infer[\mathbf{drop}]{d_{k}:N_{k+1}\rightarrow N_{k}}{}
\]
}{\footnotesize \par}

\item \textbf{\footnotesize{}\uline{$\mathbf{generation\; of\; arrows\; from\; arrows}$}}{\footnotesize \par}

{\footnotesize{}
\[
\infer[\mathbf{composition}]{g\circ f:X\rightarrow Z}{f:X\rightarrow Y\quad g:Y\rightarrow Z}\qquad\infer[\mathbf{\otimes-arrow}]{g\otimes f:X\otimes Z\rightarrow Y\otimes W}{f:X\rightarrow Y\quad g:Z\rightarrow W}
\]
}{\footnotesize \par}

\item \textbf{\footnotesize{}\uline{$\mathbf{generation\; of\; arrows\; from\; objects\; and\; natural\: isomorphisms}$}}{\footnotesize \par}

{\footnotesize{}
\[
\infer[\mathbf{left}]{l:\top\otimes X\rightarrow X}{X}\quad\infer[\mathbf{symmetry}]{\sigma:X\otimes Y\rightarrow Y\otimes X}{X\quad Y}
\]
}{\footnotesize \par}

\end{enumerate}

{\footnotesize{}
\[
\infer[\mathbf{associativity}]{a:(X\otimes Y)\otimes Z\rightarrow X\otimes(Y\otimes Z)}{X\quad Y\quad Z}
\]
}{\footnotesize \par}

\item \textbf{\footnotesize{}\uline{Flat Recursion}}{\footnotesize \par}

\noindent {\footnotesize{}
\[
\infer[\mathbf{F}\mathbf{R}_{k}]{FR_{k}(g,h):N_{k}\otimes X\rightarrow Y}{g:X\rightarrow Y\quad h:N_{k}\otimes X\rightarrow Y}
\]
}{\footnotesize \par}

\noindent {\footnotesize{}$\textrm{where \ensuremath{X\textrm{ and }Y}are in the form \ensuremath{N_{k}^{\alpha}}}$}\end{enumerate}
\end{minipage}}{\scriptsize \par}

$ $

We now define some assignations which turn out to be, respectively,
\emph{SM} endofunctors and \emph{SM} natural transformations.
\begin{enumerate}
\item Let be {\footnotesize{}
\[
T_{k}X=\begin{cases}
\top & \textrm{if \ensuremath{X=N_{0}\textrm{ and }k=0}}\\
N_{k-1} & \textrm{if \ensuremath{X=N_{k}\textrm{ and }k\neq0}}\\
T_{k}Y\otimes T_{k}Z & \textrm{if }X=Y\otimes Z\\
X & \textrm{ otherwise }
\end{cases}\qquad G_{k}X=\begin{cases}
N_{k+1} & \textrm{if \ensuremath{X=N_{k}}}\\
G_{k}Y\otimes G_{k}Z & \textrm{if }X=Y\otimes Z\\
X & \textrm{ otherwise }
\end{cases}
\]
}and {\footnotesize{}
\[
T_{k}f=\begin{cases}
id_{\top} & \textrm{if \ensuremath{f=0_{0}\textrm{ or }s_{0}}}\\
0_{k-1} & \textrm{if \ensuremath{f=0_{k}\textrm{ and }k\neq0}}\\
s_{k-1} & \textrm{if \ensuremath{f=s_{k}}}\textrm{ and }k\neq0\\
\tau_{T_{k}X} & \textrm{if \ensuremath{f=\tau_{X}}}\\
id_{N_{k-1}} & \textrm{if \ensuremath{f=d_{k-1}}}\textrm{ and }k\neq0\\
\tau{}_{N_{1}} & \textrm{if \ensuremath{f=d_{0}}}\textrm{ and }k=0\\
\delta_{T_{k}X} & \textrm{if \ensuremath{f=\delta_{X}}}\\
d_{k-1}\circ d_{k} & \textrm{if \ensuremath{f=d_{k}}}\textrm{ and }k\neq0\\
T_{k}g\circ T_{k}h & \textrm{if }f=g\circ h\\
T_{k}g\otimes T_{k}h & \textrm{if }f=g\otimes h\\
f & \textrm{otherwise}
\end{cases}\qquad G_{k}f=\begin{cases}
0_{k+1} & \textrm{if \ensuremath{f=0_{k}}}\\
s_{k+1} & \textrm{if \ensuremath{f=s_{k}}}\\
\tau_{G_{k}X} & \textrm{if \ensuremath{f=\tau_{X}}}\\
\delta_{G_{k}X} & \textrm{if \ensuremath{f=\delta_{X}}}\\
id_{N_{k+1}} & \textrm{if \ensuremath{f=d_{k}}}\\
d_{k-1}\circ d_{k} & \textrm{if \ensuremath{f=d_{k-1}}}\\
G_{k}g\circ G_{k}h & \textrm{if }f=g\circ h\\
T_{k}g\otimes T_{k}h & \textrm{if }f=g\otimes h\\
f & \textrm{otherwise}
\end{cases}
\]
}for $f:X\rightarrow Y$ and for each $k=0,1,...,n-2$.
\item We denote by $\epsilon_{k}:G_{k}\Longrightarrow id$ and $\eta_{k}:id\Longrightarrow T_{k}$
some assignations%
\footnote{They are useful to get morphisms between objects in different levels.%
} {\footnotesize{}
\[
\epsilon_{k}X=\begin{cases}
\top & \textrm{if }X=\top\\
d_{k} & \textrm{if \ensuremath{X=N{}_{k}}}\\
\epsilon_{k}Y\otimes\epsilon_{k}Z & \textrm{if }X=Y\otimes Z\\
id_{X} & \textrm{otherwise}
\end{cases}\qquad\eta_{k}X=\begin{cases}
\top & \textrm{if }X=\top\\
d_{k-1} & \textrm{if \ensuremath{X=N{}_{k}}}\textrm{ and }k\neq0\\
\tau{}_{N_{0}} & \textrm{if \ensuremath{X=N_{0}}}\textrm{ and }k=0\\
\eta_{k}Y\otimes\eta_{k}Z & \textrm{if }X=Y\otimes Z\\
id_{X} & \textrm{otherwise}
\end{cases}
\]
}{\footnotesize \par}
\end{enumerate}
It easy to see that $T_{k}$ and $G_{k}$ are endofunctors and $\epsilon_{k}$
and $\eta_{k}$ are natural transformations in the free \emph{SM}
category defined by the rules above for each $k=0,1,...,n-2$. 

$ $

\noindent {\scriptsize{}}%
\doublebox{\begin{minipage}[t]{1\columnwidth}%
5.\textbf{\footnotesize{} }\textbf{\footnotesize{}\uline{Raising
arrows}}{\footnotesize \par}

\noindent {\footnotesize{}
\[
\infer[\mathbf{G_{k}-arrow}]{G_{k}f:G_{k}X\rightarrow G_{k}Y}{f:X\rightarrow Y}
\]
}{\footnotesize \par}

\noindent 6.\textbf{\footnotesize{} }\textbf{\footnotesize{}\uline{Safe
Recursion}}{\footnotesize{}
\[
\infer[\mathbf{SRR_{k}}]{SRR_{k}(g,h):N_{k+1}\otimes X\rightarrow Y}{g:X\rightarrow Y\quad h:Y\rightarrow Y}
\]
}{\footnotesize \par}

\noindent {\footnotesize{}$\quad\;\textrm{where }T_{k}...T_{0}Y\textrm{ is isomorphic to }\top$}{\footnotesize \par}

\noindent 7.\textbf{\footnotesize{} }\textbf{\footnotesize{}\uline{Safe
Dependent Recursion}}{\footnotesize \par}

\noindent {\footnotesize{}
\[
\infer[\mathbf{SDR_{k}}]{SDR_{k}(g,h):N_{k+1}\otimes X\rightarrow Y}{g:X\rightarrow Y\quad h:(N_{k+1}\otimes X)\otimes Y\longrightarrow Y}
\]
}{\footnotesize \par}

\noindent {\footnotesize{}$\quad\;\textrm{where }T_{k}...T_{0}Y\textrm{ is isomorphic to }\top$}%
\end{minipage}}
\end{document}